\begin{document}

\title{Scissors Congruence as K-theory}
\author{Inna Zakharevich}
\date{}
\maketitle

\begin{abstract}
Scissors congruence groups have traditionally been expressed algebraically in terms of group homology.  We give an alternate construction of these groups by producing them as the $0$-level in the algebraic $K$-theory of a Waldhausen category.
\end{abstract}

\section{Introduction}

The classical question of scissors congruence concerns the subdivisions of polyhedra.  Given a polyhedron, when is it possible to dissect it into smaller polyhedra and rearrange the pieces into a rectangular prism?  Or, more generally, given two polyhedra when is it possible to dissect one and rearrange it into the other?

In more modern language, we can express scissors congruence as a question about groups.  Let 
$\mathcal{P}(E^3)$ be the free abelian group generated by polyhedra $P$, quotiented out by the two relations $[P] = [Q]$ if $P\cong Q$, and $[P\cup P'] = [P] + [P']$ if $P\cap P'$ has measure $0$.  Can we compute $\mathcal{P}(E^3)$?  Can we construct a full set of invariants on it?  More generally, let $X$ be $E^n$ (Euclidean space), $S^n$, or $H^n$ (hyperbolic space).  We define a simplex of $X$ to be the convex hull of $n+1$ points in $X$ (restricted to an open hemisphere if $X =S^n$), and a polytope of $X$ to be a finite union of simplices.  Let $G$ be any subgroup of the group of isometries of $X$.  Then we can define a group $\mathcal{P}(X,G)$ to be the free abelian group generated by polytopes $P$ of $X$, modulo the two relations $[P] = [g\cdot P]$ for any polytope $P$ and $g\in G$, and $[P\cup P'] = [P] + [P']$ for any two polytopes $P,P'$ such that $P\cap P'$ has measure $0$.  The goal of Hilbert's third problem is to classify these groups.

The traditional approach to this question has been using the tools of group homology.  It is easy to see from the definition above that 
$$\mathcal{P}(X,G) = H_0(G, \mathcal{P}(X,\{\})),$$
and more generally that for any normal subgroup $H$ of $G$, 
$$\mathcal{P}(X,G) = H_0(G/H, \mathcal{P}(X,H)).$$
To see a detailed exploration of this perspective, see \cite{dupont01}.

On the other hand, the question of scissors congruence is highly reminiscent of a K-theoretic question.  Algebraic K-theory classifies projective modules according to their decompositions into smaller modules; topological K-theory classifies vector bundles according to decompositions into smaller-dimensional bundles.  Thus it is reasonable to ask whether scissors congruence can also be expressed as a K-theoretic question about polytopes being decomposed into smaller polytopes.

This question is made somewhat more difficult because while $\mathcal{P}(X,G)$ is exactly the $G$-coinvariants of the group completion of the monoid of polytopes under union, we cannot express it in the other order.  Since we only defined union when our polytopes had measure-$0$ intersection we cannot directly give the scissors congruence group as a group completion.  Instead we construct this question more indirectly.

In his book \cite{sah79}, Sah defined a notion of ``abstract scissors congruence'', an abstract set of axioms that are sufficient to define a scissors congruence group.  Inspired by this, we define a ``polytope complex'' to be a category which contains enough information to encode scissors congruence information. We then use a modified Q construction to construct a category which encodes the movement of polytopes and which has a K-theory spectrum associated to it.  We then prove that $K_0$ of this category is exactly $\mathcal{P}(X,G)$.

Our main goal is to prove a slight generalization the following theorem.  (For the full statement, see theorem \ref{thm:Wald}.)
\begin{theorem}
Given a manifold $X$, equal to $S^n$, $E^n$, or $H^n$, and a subgroup $G$ of the isometries of $X$, there exists a Waldhausen category $\SC(X)$ such that $K_0\SC(X) \cong \mathcal{P}(X,G)$.
\end{theorem}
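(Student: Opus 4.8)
The plan is to build the Waldhausen category $\SC(X)$ in stages, starting from the combinatorial data of polytopes and their subdivisions, and then identify $K_0$ by unwinding the definitions. First I would make precise the notion of ``polytope complex'' alluded to in the introduction: a category $\mathcal{C}$ of polytopes in $X$ whose morphisms encode both the action of $G$ and the operation of passing to a subpolytope, together with a Grothendieck-topology-like structure recording when a polytope is covered (up to measure zero) by finitely many subpolytopes. For $X$ with isometry subgroup $G$, the relevant complex has objects the polytopes of $X$, ``sub'' maps given by inclusions of subpolytopes, and ``iso'' maps given by elements of $G$; the covers are the finite measure-zero-overlap decompositions $P = P_1 \cup \cdots \cup P_k$.

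Next I would run a $Q$-type construction on this complex — a modification of Quillen's $Q$-construction adapted to the fact that we have covers rather than short exact sequences — to obtain a category $\mathcal{Q}\mathcal{C}$, and then exhibit a Waldhausen category structure whose associated $S_\bullet$-construction (or equivalently whose $\mathcal{Q}$-construction) has the same $K$-theory. Concretely, cofibrations should be the ``sub'' inclusions and weak equivalences should be generated by the $G$-action together with the relation that identifies a polytope with the formal sum of the pieces in any of its covers; one must check the Waldhausen axioms (pushouts along cofibrations, the gluing lemma, the saturation/extension axioms for weak equivalences), which is where a fair amount of bookkeeping lives but no conceptual difficulty. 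Call the result $\SC(X)$.

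With $\SC(X)$ in hand, the computation of $K_0$ is a matter of reading off generators and relations. By the standard description, $K_0\SC(X)$ is the free abelian group on weak equivalence classes of objects modulo the relation $[B] = [A] + [B/A]$ for each cofibration $A \rightarrowtail B$. The generators are then classes of polytopes; the cofibration relation applied to a decomposition $P = P_1 \cup \cdots \cup P_k$ yields $[P] = \sum_i [P_i]$ when the pairwise intersections have measure zero; and the weak equivalences coming from $G$ force $[P] = [g\cdot P]$. One checks conversely that these are the only relations, i.e.\ that the evident map $\mathcal{P}(X,G) \to K_0\SC(X)$ has an inverse, by constructing the map $K_0\SC(X) \to \mathcal{P}(X,G)$ sending $[P] \mapsto [P]$ and verifying it is well-defined on the cofibration and weak-equivalence relations — this is immediate from the definition of $\mathcal{P}(X,G)$.

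The main obstacle I anticipate is not $K_0$ itself but getting a Waldhausen structure that is genuinely correct: the subtlety flagged in the introduction is that union is only partially defined (on measure-zero intersections), so one cannot naively group-complete a monoid, and one must be careful that the cofiber $B/A$ is a well-defined object of the category rather than merely a formal difference. I expect this forces the objects of $\SC(X)$ to be something more flexible than honest polytopes — perhaps formal differences, or pairs, or ``polytopes up to lower-dimensional pieces'' — and the real work is choosing this category so that (i) pushouts along cofibrations exist, (ii) the weak equivalences satisfy the gluing lemma, and (iii) the resulting $K_0$ still collapses to $\mathcal{P}(X,G)$ rather than to something larger. Verifying the gluing/extension axioms in the presence of the measure-zero fudge is the step I would budget the most care for; everything downstream of a correctly-set-up $\SC(X)$ is formal.
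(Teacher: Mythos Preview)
Your plan is essentially the paper's: encode polytopes, inclusions, the $G$-action, and covers as a double category (the ``polytope complex''), run a $Q$-type span construction, verify the Waldhausen axioms, and read off $K_0$ from generators and relations. The one place you are uncertain --- what the objects of $\SC(X)$ should be --- is the main choice the paper makes, and your guesses (formal differences, pairs, polytopes-mod-boundary) are not what it does. The paper takes objects to be \emph{finite formal coproducts} of polytopes, via a Grothendieck construction $\Tw(\C_p)$ over $\FinSet$; a morphism $A\to B$ is then a span consisting of a sub-map $A'\to A$ and a horizontal ($G$-)map $A'\to B$. Cofibrations require $A'\to A$ to be a cover and the underlying set map of $A'\to B$ to be injective; weak equivalences require the set map to be bijective. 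With formal coproducts available the cofiber $B/A$ is an honest object (the summand of $B$ complementary to the image of $A'$), every cofibration sequence splits canonically, and the $K_0$ computation reduces to the relations $[A\amalg B]=[A]+[B]$ and $[A]=[B]$ for weak equivalences --- which unwind immediately to the two relations defining $\mathcal{P}(X,G)$.
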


The organization of this paper is as follows.  Section 2 introduces the two categorical constructions we will use: a Grothendieck construction indexed over finite sets, and a double category.  Section 3 explores the abstract notion of a polytope, and introduces the types of objects for which we can define scissors congruence categories.  Sections 4 and 5 define the Waldhausen category structure and give some examples of the kinds of spectra we can produce.  Section 6 consists of technical results and the proof of the main theorem.

\section{Categorical Preliminaries}

\subsection{Grothendieck Twists}

\begin{definition} 
  Given a category $\D$ we define the contravariant functor
  $F_\D:\FinSet^\op\rightarrow \mathbf{Cat}$ by $I\mapsto
  \D^I$.  The \textsl{Grothendieck twist of $\D$}, written $\Tw(\D)$,
  is defined to be the following Grothendieck construction applied to
  $F_\D$.  We let the objects of $\Tw(\D)$ be pairs $I\in
  \FinSet$, and $x\in \D^I$.  A morphism $(I,x)\rightarrow
  (J,y)$ in $\Tw(\D)$ will be a morphism $I\rightarrow J\in
  \FinSet$, together with a morphism $y\rightarrow
  F_\D(f)(x)\in \D^I$.  We will often refer to the function $I\rightarrow J$ as the \textsl{set
    map} of a morphism.
\end{definition}

$\Tw(\D)$ is the Grothendieck construction
$(\int_{\FinSet^\op}F_\D^\op)^\op$.  More concretely, an
object of $\Tw(\D)$ is a finite set $I$ and a set map $f:I\rightarrow
\ob\D$; we will write an object of this form as $\SCob{a}{i}$, with
the understanding that $a_i = f(i)$.  A morphism
$\SCob{a}{i}\rightarrow \SCob{b}{j}\in\Tw(\D)$ is a pair consisting of
a morphism of finite sets $f:I\rightarrow J$, together with morphisms
$F_i:a_i\rightarrow b_{f(i)}$ for all $i\in I$.



In general we will denote a morphism of $\Tw(\D)$ by a lower-case letter.  By an abuse of notation, we will use the same letter to refer to the morphism's set map, and the upper-case of that letter to refer to the $\D$-components of the morphism (as we did above).  If a morphism $f:\SCob{a}{i}\rightarrow \SCob{b}{j}$ has its set map equal to the identity on $I$ we will say that $f$ is a \textsl{pure $\D$-map}; if instead we have $F_i:a_i\rightarrow b_{f(i)}$ equal to the identity on $a_i$ for every $i$ we will say that $f$ is a \textsl{pure set map}.

If we consider an object $\SCob{a}{i}$ of $\Tw(\D)$ to be a formal sum $\sum_{i\in I}a_i$ then we see that $\Tw(\D)$ is the category of formal sums of objects in $\D$.  Then we have a monoid structure on the isomorphism classes of objects of $\Tw(\D)$ (with addition induced by the coproduct).  In later sections we will investigate the group completion of this monoid, but for now we will examine the structures which are preserved by this construction.

Much of $\Tw(\D)$'s structure comes from ``stacking'' diagrams of $\D$, so it stands to reason that much of $\D$'s structure would be preserved by this construction.  The interesting consequence of this ``layering'' effect is that even though we have added in formal coproducts, computations with these coproducts can often be reduced to morphisms to singletons.  Given any morphism $f:\SCob{a}{i}\rightarrow \SCob{b}{j}$ we can write it as
$$
\coprod_{j\in J} \left(\begin{diagram}\{a_i\}_{i\in f^{-1}(j)} & \rTo^{\left.f\right|_{f^{-1}(j)}} & \{b_j\}\end{diagram}\right)
$$

\begin{lemma} \lbl{lem:pullbacks} If $\D$ has all pullbacks then
  $\Tw(\D)$ has all pullbacks.  The pullback of the diagram
  \begin{diagram}
    \SCob{a}{i} & \rTo^f & \SCob{c}{k} & \lTo^g & \SCob{b}{j}
  \end{diagram} 
  is $\{a_i\times_{c_k} b_j\}_{(i,j)\in I\times_KJ}$.
\end{lemma}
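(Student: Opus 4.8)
The plan is to write down the claimed object, equip it with the obvious pair of projections, and verify the universal property of a pullback directly, one level at a time. Write $f : I \to K$ and $g : J \to K$ for the set maps underlying the two morphisms of the cospan; since $\FinSet$ has pullbacks, the finite set $I \times_K J$ is defined. For $(i,j) \in I \times_K J$ put $k = f(i) = g(j)$ and use the $\D$-components $F_i : a_i \to c_k$ and $G_j : b_j \to c_k$ to form the pullback $a_i \times_{c_k} b_j$ in $\D$, with its two projections. Collecting these over all $(i,j)$ yields the object $P = \{a_i \times_{c_k} b_j\}_{(i,j) \in I \times_K J}$ together with morphisms $\pi_a : P \to \SCob{a}{i}$ and $\pi_b : P \to \SCob{b}{j}$ whose set maps are the canonical projections $I \times_K J \to I$ and $I \times_K J \to J$ and whose $\D$-components are the pullback projections in $\D$.

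Next I would record the composition rule in $\Tw(\D)$ that makes the rest go through: the $l$-component of a composite $\SCob{d}{l} \to \SCob{a}{i} \to \SCob{c}{k}$ is $F_{h(l)} \circ H_l$, where $h$ is the set map and $(H_l)_l$ the $\D$-components of the first morphism. Granting this, commutativity of the square built from $\pi_a$ and $\pi_b$ is immediate: on set maps it is exactly the statement that $I \times_K J$ is a pullback in $\FinSet$, and on $\D$-components the two composites have $(i,j)$-component $F_i$ (resp.\ $G_j$) postcomposed with a projection of $a_i \times_{c_k} b_j$, and these agree because $a_i \times_{c_k} b_j$ is a pullback in $\D$.

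For the universal property, suppose given $\SCob{d}{l}$ with morphisms $u : \SCob{d}{l} \to \SCob{a}{i}$ and $v : \SCob{d}{l} \to \SCob{b}{j}$ whose composites into $\SCob{c}{k}$ coincide. On set maps $f \circ u = g \circ v : L \to K$, so the universal property of $I \times_K J$ in $\FinSet$ produces a unique set map $w : L \to I \times_K J$ over $u$ and $v$, namely $w(l) = (u(l), v(l))$. For each $l$, writing $(i,j) = w(l)$ and $k$ for the common image, equality of the two composites forces $F_i \circ U_l = G_j \circ V_l : d_l \to c_k$, whence the universal property of $a_i \times_{c_k} b_j$ in $\D$ supplies a unique $W_l : d_l \to a_i \times_{c_k} b_j$ projecting to $U_l$ and $V_l$. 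The pair $(w, (W_l)_l)$ is then the desired morphism $\SCob{d}{l} \to P$, and uniqueness at each level gives uniqueness in $\Tw(\D)$.

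I do not expect a real obstacle: the factorization is forced both on indexing sets and in $\D$, so the substance of the argument is just the bookkeeping that matches the set pullback $I \times_K J$ with the fibrewise pullbacks in $\D$ and applies the composition convention of $\Tw(\D)$ consistently. (Alternatively, one could appeal to the general principle that a Grothendieck construction over a base with pullbacks, whose fibres have pullbacks that are preserved by the transition functors, again has pullbacks: here the base $\FinSet$ has pullbacks, pullbacks in each fibre $\D^I$ are computed coordinatewise, and the restriction functors $F_\D(f) : \D^J \to \D^I$ therefore preserve them. The direct verification above seems short enough to be worth giving outright.)
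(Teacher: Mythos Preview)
The paper states this lemma without proof, presumably regarding the verification as routine. Your argument is the natural direct check: compute the pullback of set maps in $\FinSet$, take fibrewise pullbacks in $\D$, and verify the universal property componentwise. It is correct and is exactly the argument one would supply if asked to fill in the omitted details; the parenthetical remark about Grothendieck constructions over bases with pullbacks is also a valid alternative justification.
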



However, sometimes the categories we will be considering will not be
closed under pullbacks.  It turns out, however, that if we are simply
removing some objects which are ``sources'' then $\Tw(\D)$ will still
be clsoed under pullbacks.

\begin{lemma} \lbl{lem:remsource} Let $\C$ be a full subcategory of
  $\D$ which is equal to its essential image, and let $\D'$ be the
  full subcategory of $\D$ consisting of all objects not in $\C$.
  Suppose that $\C$ has the property that for any $A\in \C$, if
  $\Hom(B,A)\neq \varnothing$ then $B\in \C$.   Then if $\D$ has all
  pullbacks, so does $\Tw(\D')$.
\end{lemma}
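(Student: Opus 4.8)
The plan is to produce pullbacks in $\Tw(\D')$ from the pullbacks in $\Tw(\D)$ that Lemma~\ref{lem:pullbacks} already provides, simply by discarding those components of the $\Tw(\D)$-pullback that happen to lie in $\C$. The reason this should work is that the hypothesis on $\C$ says precisely that an object of $\C$ admits no incoming morphism from an object of $\D'$; hence the discarded components can never lie in the image of a morphism out of an object of $\Tw(\D')$, and so are invisible to any universal property formulated inside $\Tw(\D')$.

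First I would record two bookkeeping points. Since $\D'$ is a full subcategory of $\D$, the category $\Tw(\D')$ is a full subcategory of $\Tw(\D)$: a morphism of $\Tw(\D)$ between objects all of whose components lie in $\D'$ automatically has all of its $\D$-components in $\D'$. And since $\C$ equals its essential image, the property of lying in $\C$ is invariant under isomorphism, so for a chosen pullback object $a_i\times_{c_k}b_j$ of $\D$ it is unambiguous whether it lies in $\C$ or in $\D'$. Now, given a cospan $f\colon\SCob{a}{i}\to\SCob{c}{k}$ and $g\colon\SCob{b}{j}\to\SCob{c}{k}$ in $\Tw(\D')$, form its pullback $P=\{a_i\times_{c_k}b_j\}_{(i,j)\in I\times_KJ}$ in $\Tw(\D)$ via Lemma~\ref{lem:pullbacks}; let $S\subseteq I\times_KJ$ be the set of indices $(i,j)$ with $a_i\times_{c_k}b_j\notin\C$; and let $P'$ be the restriction of $P$ to the index set $S$. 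Then $P'$ is an object of $\Tw(\D')$, and the two projections of $P$ restrict to morphisms $P'\to\SCob{a}{i}$ and $P'\to\SCob{b}{j}$ forming a commuting square.

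It then remains to check that this square is a pullback in $\Tw(\D')$, which is the only real content. Let $\SCob{d}{l}$ be an object of $\Tw(\D')$ carrying a compatible cone to $\SCob{a}{i}$ and $\SCob{b}{j}$; by fullness this is also a compatible cone in $\Tw(\D)$, so there is a unique comparison morphism $\theta\colon\SCob{d}{l}\to P$. If the set map of $\theta$ carried some $l\in L$ to an index $(i,j)\notin S$, then the $\D$-component of $\theta$ at $l$ would be a morphism $d_l\to a_i\times_{c_k}b_j$ whose target lies in $\C$, forcing $d_l\in\C$ by the hypothesis on $\C$ and contradicting $\SCob{d}{l}\in\Tw(\D')$. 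Hence the set map of $\theta$ lands in $S$, so $\theta$ factors through the inclusion $P'\hookrightarrow P$; this factorization is a morphism of $\Tw(\D')$ by fullness, it is unique because $\theta$ is (the inclusion being a monomorphism), and it is compatible with the projections because $\theta$ is. I do not expect a genuine obstacle: the construction of $P'$ is forced, and the only delicate point is to keep the set-map part and the $\D$-component part of a $\Tw$-morphism separate and to see that the discarded components really are unreachable from $\Tw(\D')$ --- which is exactly the ``source'' hypothesis on $\C$.
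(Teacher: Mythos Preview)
Your proof is correct and follows essentially the same approach as the paper: form the pullback in $\Tw(\D)$, discard the components lying in $\C$, and verify the universal property in $\Tw(\D')$ using the ``source'' hypothesis on $\C$. The only cosmetic difference is that the paper packages the discarding step as a functor $P:\Tw(\D)\to\Tw(\D')$ and appeals to its functoriality, whereas you carry out the same check explicitly for the comparison morphism $\theta$.
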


\begin{proof} Let $U:\Tw(\D') \rightarrow \Tw(\D)$ be the inclusion
  induced by the inclusion $\D'\rightarrow \D$.  We define a projection functor $P:\Tw(\D)\rightarrow
  \Tw(\D')$ by $P(\SCob{a}{i}) = \{a_i\}_{i\in I'}$, where $I'= \{i\in
  I\,|\, a_i\not\in \C\}$.

Suppose that we are given a diagram $A\rightarrow C \leftarrow B$ in
$\Tw(\D')$.  Let $X$ be the pullback of $UA \rightarrow UC
\leftarrow UB$ in $\Tw(\D)$; we claim that $PX$ is the pullback
of $A\rightarrow C \leftarrow B$ in $\Tw(\D')$.  Indeed,
suppose we have a cone over our diagram with vertex $D$, then $UD$
will factor through $X$, and thus $PUD = D$ will factor through $PX$.
Checking that this factorization is unique is trivial.
\end{proof}

We finish up this section with a quick result about pushouts.  It's clear that $\Tw(\D)$ has all finite coproducts, since we compute it by simply taking disjoint unions of indexing sets.  However, it turns out that a lot more is true.

\begin{lemma} \lbl{lem:twpushouts}
If $\D$ contains all finite connected colimits then $\Tw(\D)$ has all pushouts.
\end{lemma}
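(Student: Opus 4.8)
The plan is to build the pushout by hand, guided by the coproduct decomposition of morphisms recorded before Lemma~\ref{lem:pullbacks}. Given a span $\SCob{b}{j} \xleftarrow{f} \SCob{a}{i} \xrightarrow{g} \SCob{c}{k}$ with underlying set maps $f\colon I\to J$ and $g\colon I\to K$ and $\D$-components $F_i\colon a_i\to b_{f(i)}$, $G_i\colon a_i\to c_{g(i)}$, first form the pushout $L = J\sqcup_I K$ in $\FinSet$ (which exists since $\FinSet$ is finitely cocomplete), with structure maps $p\colon J\to L$ and $q\colon K\to L$. Concretely $L$ is the set of connected components of the finite graph on vertex set $J\sqcup K$ having one edge joining $f(i)$ to $g(i)$ for each $i\in I$. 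Using the coproduct decomposition, the whole span is canonically the coproduct over $\ell\in L$ of its restrictions to the indices lying over $\ell$ (namely $I_\ell = (pf)^{-1}(\ell)$, $J_\ell = p^{-1}(\ell)$, $K_\ell = q^{-1}(\ell)$); since colimits commute with colimits and $\Tw(\D)$ has all finite coproducts, it suffices to construct the pushout of each restricted span, which has $J_\ell\sqcup_{I_\ell}K_\ell = \{\ast\}$.

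So assume $J\sqcup_I K = \{\ast\}$. Consider the diagram $\mathcal{D}$ in $\D$ whose objects are the $a_i$ ($i\in I$), $b_j$ ($j\in J$) and $c_k$ ($k\in K$), with non-identity morphisms the given $F_i\colon a_i\to b_{f(i)}$ and $G_i\colon a_i\to c_{g(i)}$. This $\mathcal{D}$ is finite, non-empty, and connected: its underlying graph is obtained from the (connected, non-empty) graph defining $L$ by subdividing each edge $f(i)$--$g(i)$ with the vertex $a_i$. Hence $d := \operatorname{colim}\mathcal{D}$ exists in $\D$ by hypothesis. The candidate pushout is the object $\SCob{d}{\ast}$ of $\Tw(\D)$ with indexing set a point and value $d$, equipped with structure maps $\SCob{b}{j}\to\SCob{d}{\ast}$ and $\SCob{c}{k}\to\SCob{d}{\ast}$ given on sets by the unique maps to $\{\ast\}$ and on $\D$-components by the colimit coprojections $b_j\to d$ and $c_k\to d$. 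It is worth emphasizing that the hypothesis is invoked exactly once, to produce $\operatorname{colim}\mathcal{D}$, and that only colimits over \emph{non-empty} connected diagrams are ever needed --- each component of $L$ contains a point of $J\sqcup K$, so $\mathcal{D}$ is never empty --- which is precisely why ``finite connected colimits'' (rather than ``all finite colimits'', which would also demand more of $\D$) is the correct hypothesis.

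Finally one checks the universal property. That $\SCob{d}{\ast}$ is a cocone under the span is immediate: the set-map square commutes trivially, and each $\D$-square commutes since both composites are the coprojection $a_i\to d$ of the colimit cocone. Given any other cocone $\SCob{b}{j}\to\SCob{e}{m}\leftarrow\SCob{c}{k}$, its set maps $J\to M$ and $K\to M$ agree after precomposition with $f$ and $g$, hence factor through $\{\ast\} = J\sqcup_I K$; this forces the set map of any mediating morphism $\SCob{d}{\ast}\to\SCob{e}{m}$ to be the constant map at the resulting point $m_0\in M$. The $\D$-components of the cocone then constitute a cocone under $\mathcal{D}$ with vertex $e_{m_0}$, which by the universal property of $d=\operatorname{colim}\mathcal{D}$ factors uniquely through $d$; this yields the unique mediating morphism. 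Reassembling over the components of $L$ (using that coproducts in $\Tw(\D)$ are computed on indexing sets) gives pushouts of arbitrary spans. The only genuinely non-formal point in the argument is the connectedness of $\mathcal{D}$; everything else is the routine bookkeeping of Grothendieck-twist morphisms, carried out one component of $L$ at a time.
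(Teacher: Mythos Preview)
Your proof is correct. Both arguments land on the same colimit description --- the value at a point $\ell$ of the pushout index set is the colimit of the finite connected diagram built from the $a_i,b_j,c_k$ lying over $\ell$ together with the maps $F_i,G_i$ --- but you reach it by a different route. The paper factors one leg of the span as a pure $\D$-map followed by a pure set map and then treats the two resulting pushouts separately; you instead decompose the entire span symmetrically over the connected components of $J\sqcup_I K$ and handle a single case. Your approach is cleaner and treats the two legs on equal footing, and your explicit verification of the universal property is more thorough than what the paper writes out. The paper's factorisation approach, on the other hand, makes the refinement in the subsequent Remark more immediately visible: when one leg has injective set map, the connected diagrams that arise in the pure-set-map case can be computed by iterated pushouts alone, so coequalizers in $\D$ are not needed. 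Your argument can be adapted to see this too (injectivity of one set map forces each $\mathcal{D}$ to be ``tree-shaped'' with no parallel pairs), but it is less transparent from your presentation.
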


\begin{proof}
  Consider a morphism $f:\SCob{a}{i}\rightarrow \SCob{b}{j} \in
  \Tw(\D)$.  We can factor $f$ as a pure $\D$-map followed by a pure set map.
  Thus to show that $\Tw(\D)$ contains all pushouts it suffices to
  show that $\Tw(\D)$ contains all pushouts along pure set maps and pure $\D$-maps separately.  

  Now suppose that we are given a diagram
  \begin{diagram}
    \SCob{c}{k} & \lTo^g & \SCob{a}{i} & \rTo^f & \SCob{b}{j}
  \end{diagram}
  It suffices to show that the pushout exists whenever $g$ is a pure set-map or
  a pure $\D$-map.  Suppose that $g$ is a pure set map.  For $x\in J\cup_IK$ we
  will write $I_x$ (resp. $J_x$, $K_x$) for those elements in $I$ (resp. $J$,
  $K$) which map to $x$ under the pushout morphisms.  The pushout of the above
  diagram in this case will be $\{d_x\}_{x\in J\cup_IK}$, where $d_x$ is defined
  to be the colimit of the following diagram (if it exists in $\D$).  The
  diagram will have objects $a_i,b_j,c_k$ for all $i\in I_x$, $j\in J_x$ and
  $k\in K_x$.  There will be an identity morphism $a_i\rightarrow c_{g(i)}$ and
  a morphism $F_i: a_i \rightarrow b_{f(i)}$ for all $i\in I_x$.  Note that this
  colimit must be connected, since otherwise $x$ wouldn't be a single element in
  $J \cup_I K$.

  If $g$ is a pure $\D$-map the pushout of this diagram will be $\{d_j\}_{j\in
    J}$, where $d_j$ is defined to be the colimit of the diagram
  \begin{diagram}[small]
    \coprod_{i\in I_j} c_i & \lTo^{\coprod_{i\in I_j}G_i} &
    \coprod_{i\in I_j} a_i & \rTo^{\coprod_{i\in I_j}F_i} & b_j
    && I_j = f^{-1}(j).
  \end{diagram}
  which exists as the diagram is connected.  (Also, while we wrote the above diagrams using coproducts, they do not
  actually need to exist in $\D$.  In that case, we just expand the
  coproduct in the diagram into its components to produce a diagram
  whose colimit exists in $\D$.)
\end{proof}

\begin{remark}
  In order for $\D$ to contain all finite connected colimits it
  suffices for it to contain all pushouts and all coequalizers.  If
  $\D$ has all pushouts (but not necessarily all coequalizers) then
  examination of the proof above shows that $\Tw(\D)$ must be closed
  under all pushouts along morphisms with injective set maps.
\end{remark}

\subsection{Double Categories}

We will be using the notion of double categories originally introduced by
Ehresmann in \cite{ehresmann63}; we follow the conventions used by Fiore, Paoli
and Plonk in \cite{fiorepaoliplonk08}.

\begin{definition}
A \textsl{small double category} $\C$ is a set of objects $\ob\C$ together with two sets of morphisms $\Hom_v(A,B)$ and $\Hom_h(A,B)$ for each pair of objects $A,B\in \ob\C$, which we will call the \textsl{vertical} and \textsl{horizontal} morphisms.  We will draw the vertical morphisms as dotted arrows, and the horizontal morphisms as solid arrows.  $\C$ with only the morphisms from the vertical (resp. horizontal) set forms a category which will be denoted $\C_v$ (resp. $\C_h$).  

In addition, a double category contains the data of ``commutative squares'', which are diagrams
\begin{diagram}[small]
A & \rTo^\sigma & B\\
\dSub^p && \dSub_q \\
C & \rTo^\tau & D
\end{diagram}
which indicate that ``$q\sigma = \tau p$''.  Commutative squares have to satisfy certain composition laws, which we omit here as they simply correspond to the intuition that they should behave just like commutative squares in any ordinary category.

Given two small double categories $\C$ and $\D$, a \textsl{double functor} $F:\C\rightarrow \D$ is a pair of functors $F_v:\C_v\rightarrow \D_v$ and $F_h:\C_h\rightarrow \D_h$ which takes commutative squares to commutative squares.  We will denote the category of small double categories by $\mathbf{DblCat}$.
\end{definition} 

\begin{remark}
A small double category is an internal category object in $\mathbf{Cat}$.  We do not use this definition here, however, since it obscures the inherent symmetry of a double category.
\end{remark}

In general we will label vertical morphisms with latin letters and horizontal morphisms with Greek letters.  We will also say that a diagram consisting of a mix of horizontal and vertical morphisms commutes if any purely vertical (resp. horizontal) component commutes, and if all components mixing the two types of maps consists of squares that commute in the double category structure.

Now suppose that $\C$ is a small double category.  We can define a double category $\Tw(\C)$ by letting $\ob\Tw(\C) = \ob \Tw(\C_h)$ (which are the same as $\ob\Tw(\C_v)$ so there is no breaking of symmetry).  We define the vertical morphisms to be the morphisms of $\Tw(\C_v)$ and the horizontal morphisms to be the morphisms of $\Tw(\C_h)$.  In addition, we will say that a square
\begin{diagram}[small]
\SCob{a}{i} & \rTo^\sigma & \SCob{b}{j} \\
\dSub^p && \dSub_q \\
\SCob{c}{k} &\rTo^\tau & \SCob{d}{l}
\end{diagram}
commutes if for every $i\in I$ the square
\begin{diagram}[small]
a_i & \rTo^{\Sigma_i} & b_{\sigma(i)} \\
\dSub^{P_i} && \dSub_{Q_{\sigma(i)}} \\
c_{p(i)} & \rTo^{T_{p(i)}} & d_{\tau(p(i))}
\end{diagram}
commutes.  It is easy to check that with this definition $\Tw(\C)$ forms a double category as well, and in fact that $\Tw$ is a functor $\mathbf{DblCat}\rightarrow \mathbf{DblCat}$.

\section{Abstract Scissors Congruence}

In this section we will deal with scissors congruence of abstract
objects.

\begin{definition} 
  A \textsl{polytope complex} is a double category $\C$ satisfying the
  following properties:
  \begin{itemize}
  \item[(V)] Vertically, $\C$ is a preorder which has a unique initial object
    and is closed under pullbacks.  In addition, $\C$ has a Grothendieck
    topology.
  \item[(H)] Horizontally, $\C$ is a groupoid.
  \item[(P)] Every horizontal morphism $\Sigma:\bd A&\rTo& B\ed$ induces an
    equivalence of categories $\Sigma^*:\bd (\C_v\downarrow B) & \rTo & (\C_v
    \downarrow A)\ed$ with the property that
    for any vertical morphism $P:\bd B' &\rSub& B\ed$ there exists a unique
    horizontal morphism $\bd \Sigma^*B' & \rTo & B'\ed$ such that the square
    \begin{diagram}[small] \Sigma^*B' & \rTo & B'\\ \dSub^{\Sigma^*P} && \dSub_P \\
      A & \rTo & B
    \end{diagram} commutes.
  \item[(C)] If $\{\bd X_\alpha& \rSub & X\ed\}_{\alpha \in A}$ is a
    set of vertical morphisms which is a covering family of $X$, and
    $\Sigma:\bd Y & \rTo & X\ed$ is any horizontal morphism, then
    $\{\bd \Sigma^*X_\alpha& \rSub& Y\ed\}_{\alpha\in A}$ is a covering
    family of $Y$.
  \item[(B)] If $\{\bd X_\alpha & \rSub & X\ed\}_{\alpha\in A}$ is a set
    of maps such that for each $\alpha\in A$ there exists a covering
    family $\{\bd X_{\alpha\beta}& \rSub & X_\alpha \ed\}_{\beta\in
      B_\alpha}$ such that the refinement $\{\bd X_{\alpha\beta}& \rSub &
    X\ed\}_{\alpha\in A,\ \beta\in B_\alpha}$ is also a covering family,
    then $\{\bd X_\alpha& \rSub & X \ed\}_{\alpha\in A}$ was also
    originally a covering family.
  \end{itemize}

  A \textsl{polytope} is a noninitial object of $\C$.  The full double
  subcategory of polytopes of $\C$ will be denoted $\C_p$.  We will say that two
  polytopes $a,b\in \C$ are \textsl{disjoint} if there exists an object $c\in
  \C$ with vertical morphisms $\bd a & \rSub & c\ed$ and $\bd b & \rSub & c\ed$
  such that the pullback $a\times_c b$ is the vertically initial object.
\end{definition}


The main motivating example that we will refer to for intuition will be the example of Euclidean scissors congruence. Let the polytopes of $\C$ be polygons in the Euclidean plane, where we define a polygon to be a finite union of nondegenerate triangles.  We define the vertical morphisms of $\C$ to be set inclusions (where we formally add in the empty set to be the vertically initial object).  The topology on $\C_v$ will be the usual topology induced by unions; concretely, $\{\bd P_\alpha&\rSub&P\ed\}_{\alpha \in A}$ will be a cover if $\bigcup_{\alpha\in A} P_\alpha = P$.  For every $g\in E(2)$ (the group of Euclidean transformations of the plane) and polygon $P$ we define a horizontal morphism $g:\bd P&\rTo&g\cdot P\ed$.

Then axiom (V) simply says that the intersection of two polygons is either another polygon or else has measure $0$ (and therefore we define it to be the empty set).  Axiom (H) is simply the statement that $E(2)$ is a group.  Axiom (P) says that if we have polygons $P$ and $Q$ and a Euclidean transformation $g$ that takes $P$ to $Q$, then any polygon sitting inside $P$ is taken to a unique polygon inside $Q$.  Axiom (C) says that Euclidean transformations preserve unions.  Axiom (B) says that if you have a set of polygons $\{P_\alpha\}_{\alpha \in A}$ and sets $\{P_{\alpha\beta}\}_{\beta\in B_\alpha}$ such that 
$$\bigcup_{\beta\in B_\alpha}{P_{\alpha\beta}} = P_\alpha\qquad\hbox{and}\qquad\bigcup_{\alpha\in A}\bigcup_{\beta\in B_\alpha} P_{\alpha\beta} = P$$ then we must have originally had $P = \bigcup_{\alpha\in A} P_\alpha = P$.

In order to define scissors congruence groups we want to look at the formal sums
of polygons, and quotient out by the relations that $[P] = [Q]$ if $P\cong Q$,
and if we have a finite set of polygons $\{P_i\}_{i\in I}$ which intersect only
on the boundaries that cover $P$ then $[P] = \sum_{i\in I} [P_i]$.  Using a
Grothendieck twist we can construct a category whose objects are exactly formal
sums of polygons, and whose isomorphism classes will naturally quotient out the
first of these relations.  Thus we can now draw our attention to the second
relation, which concerns ways of including smaller polygons into larger ones.
In the language of polytope complexes, we want to understand the vertical
structure of $\Tw(\C)$.

We start with some results about how to move vertical information along
horizontal morphisms.  $\C$ has the property that ``pullbacks exist'', namely
that if we have the lower-right corner of a commutative square consisting of a
vertical and a horizontal morphism then we can complete it to a commutative
square in a suitably universal fashion.  It turns out that $\Tw(\C_p)$ has the
same property.

\begin{lemma} \lbl{TwCpullback}
Given any diagram 
\begin{diagram}
A & \rTo^\sigma & B & \lSub^q & B'
\end{diagram}
in $\Tw(\C_p)$, let $(\Tw(\C_p)\downarrow (A,B'))$ be the category whose objects
are commutative squares
\begin{diagram}[small]
A' & \rTo^\tau & B'\\
\dSub^p && \dSub_q\\
A & \rTo^\sigma & B
\end{diagram}
and whose morphisms are commutative diagrams
\begin{diagram}[small]
 &  & A'_1 &  &  \\
A &\ldSub(2,1)^{p_1}& \dSub_r &\rdTo(2,1)^{\tau_2}& B' \\
& \luSub(2,1)^{p_2} & A'_2 & \ruTo(2,1)^{\tau_2} & 
\end{diagram}
Then $(\Tw(\C_p)\downarrow (A,B'))$ has a terminal object.
\end{lemma}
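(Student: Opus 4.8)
The plan is to construct the terminal object explicitly, as a ``fibrewise pullback'' of $B'$ along $\sigma$, and then verify its universal property index by index, so that everything reduces to statements already available in $\C$. Write $A=\SCob{a}{i}$, $B=\SCob{b}{j}$ and $B'=\{b'_k\}_{k\in K}$, so that $\sigma$ consists of a set map $\sigma\colon I\to J$ together with horizontal morphisms $\Sigma_i\colon a_i\to b_{\sigma(i)}$ (isomorphisms, by axiom (H)), and $q$ consists of a set map $q\colon K\to J$ together with vertical morphisms $Q_k\colon b'_k\to b_{q(k)}$. (Using the reduction to morphisms into singletons discussed before Lemma~\ref{lem:twpushouts}, one may even assume $J$ is a point, which lightens the bookkeeping by replacing $I\times_J K$ with $I\times K$ below.) For each $(i,k)\in I\times_J K$ --- that is, with $\sigma(i)=q(k)$ --- axiom (P) applied to the horizontal morphism $\Sigma_i$ and the vertical morphism $Q_k\colon b'_k\to b_{\sigma(i)}$ produces an object $\Sigma_i^*b'_k$, a vertical morphism $\Sigma_i^*Q_k\colon\Sigma_i^*b'_k\to a_i$, and a unique horizontal morphism $\theta_{i,k}\colon\Sigma_i^*b'_k\to b'_k$ fitting into a commuting square over $\Sigma_i$. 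I claim the terminal object is $A'=\{\Sigma_i^*b'_k\}_{(i,k)\in I\times_J K}$, equipped with the vertical morphism $p\colon A'\to A$ whose set map is $(i,k)\mapsto i$ and whose components are the $\Sigma_i^*Q_k$, and the horizontal morphism $\tau\colon A'\to B'$ whose set map is $(i,k)\mapsto k$ and whose components are the $\theta_{i,k}$.

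Three checks remain. First, $A'$ lies in $\Tw(\C_p)$: each $\Sigma_i^*$ is an equivalence of categories, hence preserves and reflects the initial object of the relevant slices $(\C_v\downarrow -)$, which is the vertically initial object of $\C$; so $\Sigma_i^*b'_k$ is noninitial because $b'_k$ is. Second, the square $(p,\tau,\sigma,q)$ commutes in $\Tw(\C_p)$, because its component at $(i,k)$ is precisely the axiom-(P) square for $\Sigma_i$ and $Q_k$. Third, and this is the heart of the matter, terminality. Let an object of $(\Tw(\C_p)\downarrow(A,B'))$ be a square with top-left vertex $A''=\{a''_m\}_{m\in M}$, vertical morphism $p''\colon A''\to A$ with set map $p''$ and components $P''_m$, and horizontal morphism $\tau''\colon A''\to B'$ with set map $\tau''$ and components $T''_m$. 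Commutativity of this square says exactly that, for each $m$, the square in $\C$ with horizontal edges $T''_m$ (top) and $\Sigma_{p''(m)}$ (bottom) and vertical edges $P''_m$ and $Q_{\tau''(m)}$ commutes; in particular $\sigma(p''(m))=q(\tau''(m))$, so $m\mapsto(p''(m),\tau''(m))$ is a well-defined set map $M\to I\times_J K$, and this is forced to be the set map of any comma-category morphism $r\colon A''\to A'$ by the requirements $p\circ r=p''$ and $\tau\circ r=\tau''$. For the $\C$-components I would first isolate the following fact about polytope complexes: by axioms (P) and (H), a commuting square in $\C$ with horizontal edges $\phi\colon x\to y$ (top) and $\Sigma\colon a\to b$ (bottom) and vertical edges $f\colon x\to a$ and $g\colon y\to b$ carries the same information as a vertical isomorphism $u\colon x\to\Sigma^*y$ satisfying $\Sigma^*g\circ u=f$ and sitting in the commuting square with edges $u$, $\phi$, $\theta$ and $\mathrm{id}_y$. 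Applying this to the square at index $m$ yields a vertical morphism $R_m\colon a''_m\to\Sigma_{p''(m)}^*b'_{\tau''(m)}$; the $R_m$ are the components of $r$, and the two triangles defining a morphism of $(\Tw(\C_p)\downarrow(A,B'))$ commute precisely because of the two stated properties of $u$. Uniqueness of $r$ is then automatic: its set map is forced as above, and each of its components is forced because every slice $(\C_v\downarrow a_i)$ is a preorder.

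I expect the main obstacle to be the isolated fact relating ``a commuting square over $\Sigma$'' to ``a vertical isomorphism onto $\Sigma^*y$''. Axiom (P) hands over the structure morphism $\Sigma^*y\to y$ in one direction; obtaining the vertical morphism $u$ in the other direction, with genuine uniqueness, requires turning the given square around using invertibility of horizontal morphisms (axiom (H)), feeding the result into axiom (P) for $\Sigma^{-1}$, and then using essential surjectivity of $\Sigma^*$ together with the uniqueness clause for the structure morphisms to match up the pieces. Once that lemma is in place, everything else is routine bookkeeping: each assertion about $\Tw(\C_p)$ reduces, one index at a time, to the corresponding assertion in $\C$.
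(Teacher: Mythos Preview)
Your construction is exactly the paper's: the same object $A'=\{\Sigma_i^*b'_k\}_{(i,k)\in I\times_J K}$ with the same projections, and the paper's proof simply declares terminality ``simple to check'' where you spell it out. Your isolated fact about extracting a vertical morphism $u$ from a commuting square over $\Sigma$ is a genuine detail the paper omits; your plan for it (invert $\Sigma$ via axiom (H), apply axiom (P) to $\Sigma^{-1}$, then use that $\C_v$ is a preorder so the equivalence $\Sigma^*$ reflects the order) is correct and is presumably what the author had in mind.
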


We will refer to this terminal object as the \textsl{pullback} of the diagram,
and the square that it fits into a \textsl{pullback square}.  We denote this
terminal object by $\sigma^*B'$, and call the vertical morphism
$\sigma^*q:\bd\sigma^*B' & \rSub & A\ed$ and the horizontal morphism
$\tilde\sigma:\bd \sigma^*B' & \rTo & B'\ed$.  Note that if $\sigma$ (resp. $q$)
is an isomorphism, then so is $\tilde \sigma$ (resp. $\sigma^*q$).

\begin{proof}
Let $A' = \{\Sigma_i^*b'_{j'}\}_{(i,j')\in I\times_J J'}$.  We also define
$\tau:\bd A' &\rTo & B'\ed$ by the set map $\pi_2:I\times_JJ' \rightarrow J'$
and the horizontal morphisms $\Sigma_i:\bd\Sigma_i^*b'_{j'} & \rTo & b'_{j'}\ed$
and $p:\bd A' & \rSub & A\ed$ by the set map $\pi_1:I\times_JJ'\rightarrow I$
and the vertical morphisms $\Sigma_i^*(\bd b'_{j'} & \rSub & b_{q(j)}\ed)$.
Then these complete the original diagram to a commutative square by definition;
the fact that it is terminal is simple to check.
\end{proof}

Our second relation between polygons had the condition that we needed polygons
to be disjoint, so we restrict our attention to vertical morphisms which have
only disjoint polygons in each ``layer.''

\begin{definition} Given a vertical morphism $p:\bd\SCob{a}{i}&\rSub&
  \SCob{b}{j}\ed\in \Tw(\C_p)_v$ we say that $p$ is a \textsl{sub-map} if for
  every $j\in J$ and any two distinct $i,i'\in p^{-1}(j)$ we have
  $a_i\times_{b_j} a_{i'} = \initial$ in $\C_v$.  We will say that a sub-map $p$
  is a \textsl{covering sub-map} if for every $j\in J$ the sets $\{\bd a_i &
  \rSub & b_j\ed\}_{i\in p^{-1}(j)}$ are covers according to the topology on
  $\C_v$.

We will denote the subcategory of sub-maps by $\Tw(\C_p)_v^\Sub$.
\end{definition}

In the polygon example, a sub-map is simply the inclusion of several polygons which have measure-$0$ intersection into a larger polygon.  A covering sub-map is such an inclusion which is in fact a tiling of the larger polygon.  For example:

\begin{center}
\begin{tikzpicture}
  \draw (0,1) rectangle (1,2);
  \draw (1.05, 1) -- (2.05, 1) -- (1.05, 2) -- cycle;
  \draw (1.12, 2) -- (2.12, 2) -- (2.12, 1) -- cycle;

  \draw (0.05,-1) rectangle (2.05, 0);
  \draw [->] (1,.9) -- (1,.1);
  \node at (1,-1) [label=below:covering sub-map] {};

  \draw (5,1) rectangle (6,2);
  \draw (6.05, 1) -- (7.05, 1) -- (6.05, 2) -- cycle;

  \draw (5.05,-1) rectangle (7.05, 0);
  \draw [->] (6,.9) -- (6,.1);
  \node at (6,-1) [label=below:sub-map] {};

\end{tikzpicture}
\end{center}

From this point onwards in the text all vertical morphisms of $\Tw(\C_p)$ will
be sub-maps.  If a sub-map is in fact a covering sub-map we will denote it by
$\bd A & \rCover & B\ed$.  We will also refer to horizontal morphisms as
\textsl{shuffles} for simplicity.  From lemma \ref{lem:remsource} we know that
$\Tw(\C_p)_v$ has all pullbacks, and it is easy to see that the pullback of a
sub-map will also be a sub-map.  From axiom (B) we know that if $\{\bd X_\alpha
& \rSub & X\ed\}_{\alpha \in A}$ is a covering family with $X_{\alpha_0} =
\initial$ for some $\alpha_0\in A$, then the family $\{\bd X_\alpha & \rSub &
X\ed\}_{\alpha\in A\backslash \{\alpha_0\}}$ is also a covering family.  Thsu
the pullback of a covering sub-map will be another covering sub-map, which means
that not only is $\Tw(\C_p)_v^\Sub$ a category which has all pullbacks, but in
fact the Grothendieck topology on $\C_v$ induces a Grothendieck topology on
$\Tw(\C_p)_v$.  It turns out that the pullback functor defined above acts
continuously with respect to this topology.

\begin{lemma} \lbl{lem:pullandpush}
Let $\sigma:\bd A& \rTo& B\ed$ be a shuffle.

\begin{enumerate}
\item We have a functor $\sigma^*:(\Tw(\C_p)_v^\Sub\downarrow B) \rightarrow
  (\Tw(\C_p)_v^\Sub\downarrow A)$ given by pulling back along $\sigma$.
  This functor preserves covering sub-maps.
\item $\sigma^*$ has a right adjoint $\sigma_*$ which also preserves covering sub-maps.  If $\sigma$ has an injective set map  then $\sigma^*\sigma_* \cong 1$; if $\sigma$ has a surjective set map then $\sigma_*\sigma^* \cong 1$.
\end{enumerate}
\end{lemma}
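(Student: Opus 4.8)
The plan is to obtain $\sigma^{*}$ from the pullback of Lemma~\ref{TwCpullback}, and to build $\sigma_{*}$ after factoring $\sigma$ through a shuffle with identity set map followed by one with identity shuffle components. Write $A=\SCob{a}{i}$, $B=\SCob{b}{j}$, and let $\sigma$ be given by a set map $s\colon I\to J$ together with shuffles $\Sigma_{i}\colon a_{i}\to b_{s(i)}$. By Lemma~\ref{TwCpullback}, for a sub-map $q$ into $B$ the pullback $\sigma^{*}q$ has, in its layer over $i\in I$, precisely the image under the equivalence $\Sigma_{i}^{*}$ of axiom (P) of the layer of $q$ over $s(i)$. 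An equivalence of categories preserves the vertically initial object and all pullbacks, so it sends a pairwise-disjoint family of polytopes to a pairwise-disjoint family of polytopes; hence $\sigma^{*}q$ is again a sub-map, $\sigma^{*}$ is a functor into $(\Tw(\C_p)_v^\Sub\downarrow A)$ (functoriality being immediate from the explicit formula of Lemma~\ref{TwCpullback}), and if $q$ is a covering sub-map then each of these layers is a covering family by axiom (C) applied to the shuffle $\Sigma_{i}$, so $\sigma^{*}q$ is a covering sub-map. This is part (1).

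For part (2), factor $\sigma$ as $\sigma=\rho\circ\tau$, where $\tau\colon A\to C$ has identity set map and components $\Sigma_{i}$ (here $C$ denotes the formal sum with index set $I$ and $i$-th summand $b_{s(i)}$) and $\rho\colon C\to B$ has set map $s$ and identity components. It suffices to construct right adjoints $\tau_{*},\rho_{*}$ with the asserted properties: then $\sigma^{*}=\tau^{*}\rho^{*}$ has right adjoint $\sigma_{*}:=\rho_{*}\tau_{*}$, which preserves covering sub-maps; and as the set map of $\tau$ is a bijection (so $\tau^{*}\tau_{*}\cong 1\cong\tau_{*}\tau^{*}$) while that of $\rho$ is $s$, the injectivity/surjectivity statements for $\sigma$ reduce to the corresponding ones for $\rho$.

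The shuffle $\tau$ is handled by axiom (P): each $\Sigma_{i}$ is invertible and induces an equivalence $\Sigma_{i}^{*}$, so $\tau^{*}$ is an equivalence; a quasi-inverse $\tau_{*}$, obtained by applying quasi-inverses of the $\Sigma_{i}^{*}$ layerwise, is then a right adjoint with $\tau^{*}\tau_{*}\cong 1\cong\tau_{*}\tau^{*}$, and both functors preserve covering sub-maps by axiom (C) for the $\Sigma_{i}$ and their inverses. For $\rho$, observe that a sub-map into a formal sum is the same datum as a tuple of sub-maps into its summands, so that $(\Tw(\C_p)_v^\Sub\downarrow B)\simeq\prod_{j\in J}(\Tw(\C_p)_v^\Sub\downarrow b_{j})$, and under this equivalence $\rho^{*}$ becomes the reindexing functor $(Y_{j})_{j\in J}\mapsto(Y_{s(i)})_{i\in I}$. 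Each slice $(\Tw(\C_p)_v^\Sub\downarrow b_{j})$ has finite products: the terminal object is $\mathrm{id}_{b_{j}}$, and the product of $\SCob{a}{i}$ with $\SCob{a'}{i'}$ is the family $\{a_{i}\times_{b_{j}}a'_{i'}\}$ indexed by the pairs whose pullback is noninitial, with disjointness and universality checked directly using that every morphism in the preorder $\C_v$ is monic. Hence the reindexing functor $\rho^{*}$ has a right adjoint, the fiberwise product $\rho_{*}\colon(Z_{i})_{i\in I}\mapsto\bigl(\prod_{i\in s^{-1}(j)}Z_{i}\bigr)_{j\in J}$.

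Finally one checks the properties of $\rho_{*}$. If $s^{-1}(j)=\varnothing$ its $j$-th entry is the terminal object $\mathrm{id}_{b_{j}}$, a singleton covering family; if $s^{-1}(j)\ne\varnothing$ it is a nonempty finite product of covering families, which is again a covering family --- iterate pullback-stability and the local-character axiom of the Grothendieck topology on $\C_v$, discarding any initial layers that appear by means of axiom (B). So $\rho_{*}$ preserves covering sub-maps. If $s$ is injective then $s^{-1}(s(i))=\{i\}$ and $\rho^{*}\rho_{*}\cong 1$; if $s$ is surjective then every fiber is nonempty and $\rho_{*}\rho^{*}(Y_{j})_{j}=\bigl(\prod_{i\in s^{-1}(j)}Y_{j}\bigr)_{j}$, which is isomorphic to $(Y_{j})_{j}$ since in each such slice a nonempty finite power of an object $Y$ is isomorphic to $Y$ (every morphism of $\C_v$ being monic, all off-diagonal pullbacks are initial). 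Together with the facts about $\tau$, this gives $\sigma^{*}\sigma_{*}\cong 1$ when $\sigma$ has injective set map and $\sigma_{*}\sigma^{*}\cong 1$ when it has surjective set map. I expect the construction of $\rho_{*}$ to be the main obstacle: recognizing it as a fiberwise product, establishing the requisite finite products in each slice $(\Tw(\C_p)_v^\Sub\downarrow b_{j})$, and verifying their compatibility with the topology.
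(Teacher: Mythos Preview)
Your argument is correct and arrives at the same object as the paper, but you package the construction differently. The paper proceeds directly: given a sub-map $A'\rightarrowtail A$, it exhibits
\[
\sigma_*A' \;=\; \coprod_{j\in J}\Bigl(\prod_{i\in \sigma^{-1}(j)} (\Sigma_i^{-1})^* A'_i\Bigr)
\]
as the terminal object of $(\sigma^*\downarrow A')$, reading off the injective/surjective consequences straight from this formula. You instead factor $\sigma=\rho\tau$ into its pure-$\D$ and pure-set parts, handle $\tau$ by the equivalence of axiom~(P), and recognise $\rho^*$ as a reindexing functor between products of slice categories, so that $\rho_*$ is the fibrewise product. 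Composing $\rho_*\tau_*$ recovers exactly the paper's formula, so the two arguments are the same computation viewed from different angles: the paper's is shorter and more explicit, while yours isolates the two mechanisms (horizontal equivalence versus set-theoretic reindexing) and makes the adjunction and the $\sigma_*\sigma^*\cong 1$ statement more transparent, at the cost of having to set up finite products in each slice $(\Tw(\C_p)_v^\Sub\downarrow b_j)$.

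One small remark on your surjective case: when you argue that a nonempty finite power $Y^n\cong Y$ in the slice, the off-diagonal pullbacks vanish because $Y$ is a \emph{sub-map} into $b_j$ (pairwise disjointness), while monicity in the preorder is what gives $y_k\times_{b_j}y_k\cong y_k$ on the diagonal; your parenthetical slightly conflates these, but both ingredients are present and the conclusion stands.
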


Intuitively speaking, $\sigma^*$ looks at how each polytope in the image is
decomposed and decomposes its preimages accordingly.  $\sigma_*$ figures out
what the minimal subdivision of the image that pulls back to a refinement of the
domain is.  In our polygon example, we have the following:

\begin{center}
\begin{tikzpicture}
  \path (0,0) coordinate (P);

  \draw (P) ++(-.05,0) rectangle +(1,1)
      ++(1.1,0) rectangle +(1,1);
  \draw (P) ++(4.1,0) rectangle +(1,1);

  \path (P) ++(4.1,3) coordinate (Q);
  \draw (Q) ++(0,-.05) -- +(1,0) -- +(.5,.5) -- cycle;
  \draw (Q) ++(-.05,0) -- +(0,1) -- +(.5,.5) -- cycle;
  \draw (Q) ++(1.05,0) -- +(0,1) -- +(-.5,.5) -- cycle;
  \draw (Q) ++(0,1.05) -- +(1,0) -- +(.5,-.5) -- cycle;

  \path (P) ++(-.1,3) coordinate (Q);
  \draw (Q) ++(0,-.05) -- +(1,0) -- +(.5,.5) -- cycle;
  \draw (Q) ++(-.05,0) -- +(0,1) -- +(.5,.5) -- cycle;
  \draw (Q) ++(1.05,0) -- +(0,1) -- +(-.5,.5) -- cycle;
  \draw (Q) ++(0,1.05) -- +(1,0) -- +(.5,-.5) -- cycle;
  \path (P) ++(1.1, 3) coordinate (Q);
  \draw (Q) ++(0,-.05) -- +(1,0) -- +(.5,.5) -- cycle;
  \draw (Q) ++(-.05,0) -- +(0,1) -- +(.5,.5) -- cycle;
  \draw (Q) ++(1.05,0) -- +(0,1) -- +(-.5,.5) -- cycle;
  \draw (Q) ++(0,1.05) -- +(1,0) -- +(.5,-.5) -- cycle;

  \draw (P) [->] ++(2.1,.5) -- +(1.9,0);
  \path (P) ++(3.05,.5) node [label=above:$\sigma$] {};
  \draw (P) [dotted,->] ++(4.6,2.8) -- +(0,-1.7);
  \path (P) ++(4.6,2) node [label=right:$q$] {};
  \draw (P) [->] ++(2.2,3.5) -- +(1.8,0);
  \path (P) ++(3.15,3.5) node [label=above:$\sigma'$] {};
  \draw (P) [dotted,->] ++(1,2.8) -- +(0,-1.7);
  \path (P) ++(1,2) node [label=left:$\sigma^*q$] {};
  \path (P) ++(2.6,0) node [label=below:pulling back $q$ along $\sigma$] {};

  \path (7,0) coordinate (P);

  \draw (P) ++(-.05,0) rectangle +(1,1)
      ++(1.1,0) rectangle +(1,1);
  \draw (P) ++(4.1,0) rectangle +(1,1);

  \path (P) ++(4.1,3) coordinate (Q);
  \draw (Q) ++(0,-.05) -- +(1,0) -- +(.5,.5) -- cycle;
  \draw (Q) ++(-.05,0) -- +(0,1) -- +(.5,.5) -- cycle;
  \draw (Q) ++(1.05,0) -- +(0,1) -- +(-.5,.5) -- cycle;
  \draw (Q) ++(0,1.05) -- +(1,0) -- +(.5,-.5) -- cycle;

  \path (P) ++(-.1,3) coordinate (Q);
  \draw (Q) ++(-.025,-.025) -- +(1,0) -- +(0,1) -- cycle;
  \draw (Q) ++(.025,1.025) -- +(1,0) -- +(1,-1) -- cycle;
  \path (P) ++(1.1, 3) coordinate (Q);
  \draw (Q) ++(-.025,1.025) -- +(1,0) -- +(0,-1) -- cycle;
  \draw (Q) ++(1.025,-.025) -- +(0,1) -- +(-1,0) -- cycle;

  \draw (P) [->] ++(2.1,.5) -- +(1.9,0);
  \path (P) ++(3.05,.5) node [label=above:$\sigma$] {};
  \draw (P) [dotted,->] ++(4.6,2.8) -- +(0,-1.7);
  \path (P) ++(4.6,2) node [label=right:$\sigma_*p$] {};
  \draw (P) [dotted,->] ++(1,2.8) -- +(0,-1.7);
  \path (P) ++(1,2) node [label=left:$p$] {};
  \path (P) ++(2.6,0) node [label=below:pushing forward $p$ along $\sigma$] {};

\end{tikzpicture}
\end{center}

\begin{proof} $\hbox{ }$
\begin{enumerate}
\item From lemma \ref{TwCpullback} we know that $\sigma^*$ is a functor
  $(\Tw(\C_p)_v^\Sub\downarrow B)\rightarrow (\Tw(\C_p)_v\downarrow A)$, so it
  remains to show that $\sigma^*$ maps sub-maps to sub-maps.  This follows from
  the explicit computation of $\sigma^*$ in the proof of lemma \ref{TwCpullback}
  and axiom (P) which gives us that pulling back along a horizontal morphism in
  $\C$ preserves pullbacks.  The fact that $\sigma^*$ preserves covers is true
  by axiom (C).
\item We will show that $\sigma^*$ has a right adjoint by showing that the comma
  category $(\sigma^*\downarrow A')$ has a terminal object.  We will write $A =
  \SCob{a}{i}$, $A' = \SCob{a'}{i'}$, etc.  In addition, for any vertical
  morphism $f:\bd Y=\SCob{y}{w}  & \rSub &  \SCob{z}{x}\ed$ we will use the
  notation $Y_x$ for the object $\{y_w\}_{w\in f^{-1}(x)}$.

  Suppose that we have a sub-map $q:\bd B' & \rSub & B\ed$ such that the
  pullback $\sigma^*q$ factors through $A'$.  Then for all $i\in I$ we have
  horizontal morphisms $\Sigma_i^{-1}:\bd b_{\sigma(i)} & \rTo & a_i\ed$, so by the
  definition of pullback we have sub-maps $\bd B'_{\sigma(i)} & \rSub &
  (\Sigma_i^{-1})^*A_i'\ed$ and thus we must have a sub-map 
  \begin{diagram} B'_{\sigma(i)} & \rSub & \prod_{i\in \sigma^{-1}(j)}
    (\Sigma_i^{-1})^* A'_i.
  \end{diagram}
  (As vertically we are in a preorder, products and pullbacks are the same when
  they exist; we are omitting the object that we take the pullback over for
  conciseness of notation.)
  Thus the object 
  $$X = \coprod_{j\in J}\left(\prod_{i\in \sigma^{-1}(j)} (\Sigma_i^{-1})^*
    A_i'\right)$$ is clearly terminal in $(\sigma^*\downarrow A')$, and if $\bd A'&\rCover&A\ed$ was a cover, then $\bd X&\rCover & B\ed$ will also be one.  (Note that
  if $\sigma^{-1}(j) = \initial$ then the product becomes $\{b_j\}$, as all of
  these products are in the category of objects with sub-map to $B$.)  If $\sigma$ had an injective set map then $\sigma^{-1}(j)$ has size
  either $0$ or $1$ we must have $\sigma^*\sigma_* = 1$.  If $\sigma$ has a surjective set map then by definition $A'_i = \Sigma_i^*B_j$ for $i\in \sigma^{-1}(j)$, and so in fact $X \cong B'$ and $\sigma_*\sigma^*\cong 1$.
\end{enumerate}
\end{proof}

We wrap up this section by defining the category of polytope complexes.

\begin{definition}
Let $\C$,$\D$ be two polytope complexes.  A \textsl{polytope functor} $F:\C\rightarrow \D$ is a double functor satisfying the two additional conditions
\begin{itemize}
\item[(FC)]  the vertical component $F_v:\C_v\rightarrow \D_v$ is continuous and preserves pullbacks and the initial object, and 
\item[(FP)] $F$ preserves pullbacks.  More concretely, given a vertical morphism $P:\bd B' & \rDotsto & B\ed$ and a horizontal morphism $\Sigma:\bd A & \rTo & B\ed$ in $\C$, we have $F(\Sigma^*P) = F(\Sigma)^*F(P)$.
\end{itemize}
We will denote the category of polytope complexes and polytope functors by $\mathbf{PolyCpx}$.
\end{definition}

\section{Waldhausen Category Structure}

Now that we have developed some machinery for looking at formal sums of polygons we can start constructing the group completion of our category $\Tw(\C_p)$.  In order to get this we will use Waldhausen's machinery for categories with cofibrations and weak equivalences (see \cite{waldhausen83} for more details); we denote the category of Waldhausen categories and exact functors by $\mathbf{WaldCat}$.  Our cofibrations will be inclusions of polygons which lose no information.  Our weak equivalences will be the horizontal isomorphisms, together with vertical covering sub-maps (which will quotient out by both of the relations we are interested in).  Since we now want to be able to mix sub-maps and shuffles we define our Waldhausen category by applying a sort of Q-construction to the double category $\Tw(\C_p)$.

\begin{definition} The category $\SC(\C)$ is defined to have
$\ob\SC(\C) = \ob(\Tw(\C_p))$.  The morphisms of $\SC(\C)$ are equivalence
classes of diagrams in $\Tw(\C_p)$
\begin{diagram} A & \lSub^p & A' & \rTo^\sigma & B\end{diagram}
where two diagrams are considered equivalent if there is a vertical
isomorphism $\iota:\bd A'_1&\rSub & A'_2\ed\in \Tw(\C_p)_v$ which makes the
following diagram commute:
\begin{diagram}[small] & &A_1'&  & \\ A &\ldSub(2,1)^{p_1}
&\dSub_{\iota}& \rdTo(2,1)^{\sigma_1} & B\\ & \luSub(2,1)_{p_2}&A_2'&
\ruTo(2,1)^{\sigma_2} & 
\end{diagram}
We will generally refer to a morphism as a \textsl{pure sub-map}
(resp. \textsl{pure shuffle}) if in some representing diagram the shuffle
(resp. sub-map) component is the identity.

We say that a morphism $\bd A & \lSub^p & A' & \rTo^\sigma & B\ed$
is a
\begin{description}
\item[cofibration] if $p$ is a covering sub-map and $\sigma$ has an injective set
map, and a
\item[weak equivalence] if $p$ is a covering sub-map and $\sigma$ has a bijective
set map.
\end{description}
The composition of two morphisms $f:A\rightarrow B$ and $g:B \rightarrow C$ represented by
$$\bd A & \lSub^p & A' & \rTo^\sigma & B \ed \qquad \hbox{and}\qquad \bd B & \lSub^q & B' & \rTo^\tau & C\ed$$
is defined to be the morphism represented by the sub-map $p\circ \sigma^*q$ and the shuffle $\tau \circ \tilde\sigma$.
\end{definition}

Our goal is to prove the following theorem, which states that this structure gives us exactly the scissors congruence groups we were looking for.

\begin{theorem} \lbl{thm:Wald} 
  $\SC$ is a functor $\mathbf{PolyCpx}\rightarrow \mathbf{WaldCat}$.  Every
  Waldhausen category in the image of $\SC$ satisfies the Saturation and
  Extension axioms, and has a canonical splitting for every cofibration
  sequence.   In addition, for any polytope complex $\C$, 
  $K_0\SC(\C)$ is the free abelian group generated by the polytopes of $\C$
  modulo the two relations
  $$[a] = \sum_{i\in I} [a_i] \qquad \textrm{for covering sub-maps }\bd\SCob{a}{i}& \rSub &\{a\}\ed$$
  and
  $$[a] = [b]\qquad \textrm{for horizontal morphisms } \bd a&\rTo& b\ed\in \C.$$
\end{theorem}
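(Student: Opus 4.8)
The plan is to split Theorem~\ref{thm:Wald} into four tasks: that $\SC(\C)$ is a category with cofibrations and weak equivalences and that $\SC$ is a functor; that the Saturation and Extension axioms hold; that cofibration sequences split canonically; and the computation of $K_0$. First I would check that $\SC(\C)$ is a category at all: the composite of the representing spans $\bd A & \lSub^p & A' & \rTo^\sigma & B\ed$ and $\bd B & \lSub^q & B' & \rTo^\tau & C\ed$ is formed by pulling the sub-map $q$ back along the shuffle $\sigma$, which is exactly the mixed pullback produced by Lemma~\ref{TwCpullback}, so well-definedness on equivalence classes, independence of the chosen representatives, and associativity all reduce to the universal property stated there, while the empty object $\{\}$ is both initial and terminal, hence a zero object. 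For the structural axioms: an invertible span must have a covering sub-map as its left leg and a bijective set map on its right leg, so every isomorphism is a weak equivalence, and since bijective set maps are injective every weak equivalence (in particular every isomorphism) is a cofibration; the map $\{\}\to A$ is a cofibration because the identity of $\{\}$ is vacuously a covering sub-map; closure of cofibrations and of weak equivalences under composition follows from the stability of covering sub-maps under pullback along shuffles (Lemma~\ref{lem:pullandpush}(1)), transitivity of the Grothendieck topology on $\Tw(\C_p)_v$, and the observation that the projection $I\times_J J'\to J'$ is injective, resp.\ bijective, when $\sigma$ has injective, resp.\ bijective, set map. Functoriality of $\SC$ on polytope functors is then immediate, since axioms (FC) and (FP) say precisely that all of this structure is preserved.

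The real obstacle is cobase change, and it is made delicate by the fact that $\C_v$ is only a preorder, so one cannot push out freely in $\Tw(\C_p)$. The way around this is to use the injectivity of the set map of the shuffle $\sigma$ in a cofibration $\bd A & \lSub^p & A' & \rTo^\sigma & B\ed$: the image of $\sigma$ is then literally a coproduct summand $\{b_j\}_{j\in\sigma(I')}$ of $B=\SCob{b}{j}$, so the cofiber of the cofibration is the complementary summand $\{b_j\}_{j\in J\setminus\sigma(I')}$, and a general pushout $C\cup_A B$ is assembled by forming the common refinement $A'\times_A A''$ of the two refinements of $A$ occurring in the two maps, transporting it to $B$ through the adjunction $\sigma_*\dashv\sigma^*$ of Lemma~\ref{lem:pullandpush} (using $\sigma^*\sigma_*\cong 1$, which holds exactly because $\sigma$ has injective set map), and then reattaching $C$ along the resulting coproduct decomposition. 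In this way one never needs a colimit beyond the pullbacks guaranteed by Lemma~\ref{lem:remsource} and the coproducts of $\Tw(\C_p)$. That $C\to C\cup_A B$ is again a cofibration, and that weak equivalences satisfy the gluing axiom, follow from the same stability statements. I expect this step to require the most care; everything after it is essentially bookkeeping.

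Given the structure, the remaining axioms are quick. Saturation is read off the composition formula: a composite of weak equivalences has a covering sub-map as its left leg (transitivity of covers) and a bijective set map on its right leg (bijectivity survives base change along a bijective-set-map shuffle), while the two remaining two-out-of-three cancellations hold because $\C_v$ is a preorder and $\C_h$ a groupoid, so the vertical maps in question can be inverted layer by layer. The canonical splitting sends a cofibration sequence $A\to B\to B/A$ to the section $B/A\to B$ given by the inclusion of the complementary summand; this realizes a natural weak equivalence $A\vee(B/A)\to B$, which is an isomorphism on the $(B/A)$-summand and a weak equivalence (not in general an isomorphism) on the $A$-summand, since $A$ is only weakly equivalent, via $p$, to its image summand in $B$. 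The Extension axiom then follows immediately, because a map of cofibration sequences that is a weak equivalence on the sub and the quotient becomes, under this splitting and up to natural weak equivalence, a coproduct of two weak equivalences.

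Finally, for $K_0$ I would invoke the standard presentation (see \cite{waldhausen83}): $K_0\SC(\C)$ is the free abelian group on $\ob\SC(\C)$ modulo $[B]=[A]+[B/A]$ for cofibration sequences and $[A]=[B]$ for weak equivalences. Write $\mathcal P_\C$ for the group described in the statement and define $\phi$ on generators by $\SCob{a}{i}\mapsto\sum_{i\in I}[a_i]$. Then $\phi$ respects the weak-equivalence relations: for a weak equivalence $\bd A & \lSub^p & A' & \rTo^\sigma & B\ed$, the covering sub-map $p$ restricts on each layer to a covering sub-map onto a singleton (the disjointness clause in the definition of a sub-map is exactly what makes this true), so $\phi(A)=\phi(A')$ by the first relation of $\mathcal P_\C$, and $\sigma$ is on each layer a horizontal morphism, so $\phi(A')=\phi(B)$ by the second; applying the same two computations to the complementary-summand description of the cofiber shows $\phi$ also respects the cofibration relations, so $\phi$ descends to $K_0\SC(\C)\to\mathcal P_\C$. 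Conversely define $\psi\colon\mathcal P_\C\to K_0\SC(\C)$ by $[a]\mapsto[\{a\}]$; this respects the defining relations of $\mathcal P_\C$ because a covering sub-map $\bd\SCob{a}{i} & \rCover & \{a\}\ed$ induces a weak equivalence $\{a\}\to\SCob{a}{i}$ in $\SC(\C)$ (whence $[\{a\}]=[\SCob{a}{i}]=\sum_{i}[\{a_i\}]$, the last equality coming from iterating the coproduct cofibration sequences $\{a_{i_0}\}\to\SCob{a}{i}\to\{a_i\}_{i\ne i_0}$) and because a horizontal morphism $a\to b$ induces a weak equivalence $\{a\}\to\{b\}$. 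Since $\phi\psi=\mathrm{id}$ on generators and $\psi\phi(\SCob{a}{i})=\sum_i[\{a_i\}]=[\SCob{a}{i}]$, the maps $\phi$ and $\psi$ are mutually inverse, giving $K_0\SC(\C)\cong\mathcal P_\C$.
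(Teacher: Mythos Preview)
Your overall architecture matches the paper's exactly: verify that $\SC(\C)$ is a category with cofibrations and weak equivalences, build pushouts using the $\sigma^*\dashv\sigma_*$ adjunction (you wrote the turnstile backwards, but you use the correct identity $\sigma^*\sigma_*\cong 1$), obtain the splitting from the complementary summand, and compute $K_0$ from the resulting presentation. Your $K_0$ argument is more explicit than the paper's (the paper simply quotes the split presentation and observes that every weak equivalence is a coproduct of singleton covers followed by singleton horizontal morphisms), but it is correct and equivalent.

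There are, however, two genuine gaps in your sketch. First, your saturation argument does not work as stated: the sentence ``$\C_v$ is a preorder and $\C_h$ a groupoid, so the vertical maps in question can be inverted layer by layer'' does not prove the case where $gf$ and $g$ are weak equivalences. In the composition diagram you need to deduce that $p$ is a covering sub-map from the fact that the composite $pq'$ is, and nothing about $\C_v$ being a preorder gives you this; it is precisely the content of axiom (B), and the paper singles this out as the only place in the whole proof where the full strength of (B) is used. You also need the separate observation (the paper's Lemma~\ref{lem:mapbtwncovers}) that a commutative square with covering verticals is automatically a pullback, in order to conclude that $\sigma$ is an isomorphism. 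Second, the gluing axiom does not simply ``follow from the same stability statements.'' Stability of covers under $\sigma^*$ is not enough; one needs that in a ladder of commuting squares the pushforwards $\sigma_*$ and $\sigma'_*$ are compatible with the vertical covers, which is a nontrivial computation (the paper's Lemma~\ref{lem:covermaps}). Once you supply these two ingredients, your Extension argument via the splitting and two-out-of-three is fine, and the rest of your outline goes through as written.
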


We will defer most of the proof of this theorem until the last section of the paper, as it is largely technical and not very illuminating.  Assuming the first part of the theorem, however, we will perform the computation of $K_0$ here.

\begin{proof}
In a small Waldhausen category $\E$ where every cofibration sequence splits, $K_0$ is the free abelian group generated by the objects of $\E$ under the relations that $[A\amalg B] = [A]+[B]$ for all $A,B\in \E$, and $[A] = [B]$ if there is a weak equivalence $\bd A & \rWeakEquiv & B\ed$.

In $\SC(\C)$ an object $\SCob{a}{i}$ is isomorphic to $\coprod_{i\in I} \{a_i\}$, so $K_0\SC(\C)$ is in fact generated by all polytopes of $\C$.  Now consider any weak equivalence $f:\bd A&\rWeakEquiv & B\ed\in\SC(\C)$.  We can write this weak equivalence as a pure covering sub-map followed by a pure shuffle with bijective set map (which will be an isomorphism of $\SC(\C)$).  Any isomorphism of $\SC(\C)$ is a coproduct of isomorphisms between singleton objects; any isomorphism between singletons is simply a horizontal morphism of $\C$.  Any pure covering sub-map is a coproduct of covering sub-maps of singletons.  Thus the weak equivalences generate exactly the relations given in the statement of the theorem, and we are done.
\end{proof}

\section{Examples}

\subsection{The Sphere Spectrum}

Consider the double category $\S$ with two objects, $\initial$
and $*$.  We have one vertical morphism $\bd\initial & \rSub & *\ed$
and no other non-identity morphisms.  There are no nontrivial covers.  Then $\S$
is clearly a polytope complex.

$\Tw(\S)$ will be the category of pointed finite sets, where the cofibrations
are the injective maps and the weak equivalences are the isomorphisms.  By
direct computation and the Barratt-Priddy-Quillen theorem (\cite{barrattpriddy72}) we
see that $K(\SC(\S))$ is equivalent to the sphere spectrum (which
justifies our notation).

\subsection{$G$-analogs of Spheres}

Consider the double category $\S_G$ with two objects, $\initial$ and $*$.  We have one vertical morphism $\bd\initial & \rSub & *\ed$.  In addition, $\Aut_h(*) = G$.  This is clearly a polytope complex.

The objects of $\SC(\S_G)$ are the finite sets.  As all weak equivalences are isomorphisms (and thus all cofibration sequences split exactly) we can compute the K-theory of $\SC(\S_G)$ by considering $\Omega B B(\iso\SC(\S_G))$.  The automorphism group of a set $\{1,2,\ldots,n\}$ is the group $G \wr \Sigma_n$, whose underlying set is $\Sigma_n \times G^n$, and where
$$(\sigma, (g_1,\ldots,g_n)) \cdot (\sigma', (g_1',\ldots, g_n')) = (\sigma \sigma', (g_{\sigma'(1)}g'_1, g_{\sigma'(2)}g'_2, \ldots, g_{\sigma'(n)}g'_n)).$$
Thus the K-theory spectrum of this category will be $\coprod_{n\geq 0} B(G\wr \Sigma_n)$ on the $0$-level, $B(\coprod_{n\geq 0} B(G\wr \Sigma_n))$ on the $1$-level, and an $\Omega$-spectrum above this.

\subsection{Classical Scissors Congruence}

Let $X=E^n,S^n$ or $H^n$ ($n$-dimensional hyperbolic space), and let $\C$ be the
poset of polytopes in $X$.  (A polytope in $X$ is a finite union of
$n$-simplices of $X$; an $n$-simplex of $X$ is the convex hull of $n+1$ points
(contained in a single hemisphere, if $X=S^n$).)  The group $G$ of isometries of
$X$ acts on $\C$; we define a horizontal morphism $P\rightarrow Q$ to be an
element $g\in G$ such that $g\cdot P = Q$.  We say that $\{\bd P_\alpha &\rSub &
P\ed\}_{\alpha\in A}$ is a covering family if $\bigcup_{\alpha\in A} P_\alpha =
P$.  Then $\C$ is a polytope complex.

Then by theorem \ref{thm:Wald} $K_0(\SC(\C)) = \mathcal{P}(X,G)$, the classical
scissors congruence group of $X$.  Thus it makes sense to call $K(\SC(\C))$ the
\textsl{scissors congruence spectrum} of $X$.

\subsection{Sums of Polytope Complexes}

Suppose that we have a family of polytope complexes $\{\C_\alpha\}_{\alpha\in
  A}$.  Then we can define the ``wedge'' of this family by defining $\C$ to be
the double category where $\ob\C = \{\initial\}\cup \bigcup_{\alpha\in A} \ob
\C_{\alpha p}$ (where $\initial$ will be the initial object), and all morphisms
are just those from the $\C_\alpha$.  We define $\Aut_{\C_h}(\initial) =
\bigoplus_{\alpha \in A} \Aut_{h\C_\alpha}(\initial)$.  Then $\C$ will be a
polytope complex which represents the "union" of the polytope complexes in
$\{\C_\alpha\}_{\alpha\in A}$.  $\SC(\C) = \bigoplus_{\alpha\in A} \SC(\C_\alpha)$, where the summation means that all but finitely many of the objects of each tuple will be equal to the zero object.  Then $K(\SC(\C)) = \bigvee_{\alpha\in A} K(\SC(\C_\alpha))$.

%

\subsection{Numerical Scissors Congruence}

Suppose that $K$ is a number field.  Let $\C_K$ be the polytope complex whose objects are the ideals of $\mathcal{O}_K$.  We have a vertical morphism $\bd I & \rSub & J\ed$ if $I|J$, and no nontrivial horizontal morphisms.  (Note that $\mathcal{O}_K$ is the initial object.)  We say that a finite family $\{\bd J_\alpha & \rSub & I \ed\}_{\alpha \in A}$ is a covering family if $I|\prod_{\alpha\in A} J_\alpha$, and an infinite family is a covering family if it contains a finite covering family.  In this case it is easy to compute that $K(\C_K)$ is a bouquet of spheres, one for every prime power ideal of $\mathcal{O}_K$.

Now suppose that $K/\Q$ is Galois with Galois group $G_{K/\Q}$.  We define $\C_{K/\Q}$ to be the polytope complex whose objects and vertical morphisms are the same as those of $\C_K$, but whose horizontal morphisms are induced by the action of $G_{K/\Q}$ on $\C_K$.  Then the K-theory $K(\C_{K/\Q})$ will be 
$$\bigvee_{p^\alpha\in \Z} K(\S_{D_\mathfrak{p}}),$$
where for each $p$, $\mathfrak{p}$ is a prime ideal of $K$ lying above $p$ and $D_\mathfrak{p}$ is the decomposition group of $\mathfrak{p}$.  (For more information about factorizations of prime ideals, see for example \cite{stein04}.)  Thus this spectrum encodes all of the inertial information for each prime ideal of $\Q$.

From the inclusion $\Q\rightarrow K/\Q$ we get an induced polytope functor $\C_\Q \rightarrow \C_{K/\Q}$, and therefore an induced map $f:K(\C_\Q)\rightarrow K(\C_{K/\Q})$.  To compute this map, it suffices to consider what this map does on every sphere in the bouquet $K(\C_\Q)$.  Consider the sphere indexed by a prime power $p^\alpha$.  If we factor $p = \mathfrak{p}_1^e \cdots \mathfrak{p}_g^e$ then this sphere maps to $g$ times the map $K(\S)\rightarrow K(\S_{D_\mathfrak{p}})$ (induced by the obvious inclusion $\S\rightarrow \S_{D_\mathfrak{p}}$), with target the copy of this indexed by $p^{e\alpha}$.  Thus $f$ encodes all of the splitting and ramification data of the extension $K/\Q$.  In particular, we see that the map $K(\C_\Q)\rightarrow K(\C_{K/\Q})$ contains all of the factorization information contained in $K/\Q$.

Note that we can generalize the definition of $\C_{K/\Q}$ to any Galois extension $L/K$; with this definition $\C_K = \C_{K/K}$.

\section{Proof of theorem \ref{thm:Wald}}

This section consists mostly of a lot of technical calculations which check that $\SC(\C)$ satisfies all of the properties that theorem \ref{thm:Wald} claims.  In order to spare the reader all of the messy details we isolate these results in their own section.

\subsection{Some technicalities about sub-maps and shuffles}

In this section all diagrams are in $\Tw(\C_p)$, and all vertical morphisms are sub-maps.

The following lemmas formalize the idea that we can often commute shuffles and sub-maps past one another.  In particular, it is obvious that if we have a sub-map whose codomain is a horizontal pushout, then we can pull this sub-map back to the components of the pushout.  However, it turns out that we can do this in the other direction as well: given suitably consistent sub-maps to the components of a pushout, we obtain a sub-map between pushouts.

\begin{lemma} \lbl{lem:coverpushout}
Given a diagram
\begin{diagram}[small]
C' & \lTo^{\tau'} & A' & \rTo^{\sigma'} & B'\\
\dSub^r && \dSub^p && \dSub_q \\
C & \lTo^\tau & A & \rTo^\sigma & B
\end{diagram}
where the right-hand square is a pullback square and $\sigma$ has an injective
set map, there is an induced sub-map $\bd C'\cup_{A'}B' & \rSub & C\cup_AB\ed$.
If $p,q,r$ are all covering sub-maps then this map will be, as well.
\end{lemma}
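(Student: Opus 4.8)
The plan is to reduce the statement to the essentially trivial case in which $\sigma$, and hence $\sigma'$, is the inclusion of a sub-family, where both pushouts split off as coproducts and the induced map can be written down by hand.

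First I would normalize the shuffle $\sigma$. Since $\sigma$ has an injective set map, axiom~(H) lets us factor it as an isomorphism onto its image followed by the inclusion of a sub-family of $B$; absorbing the isomorphism into the surrounding data---and using that the right-hand square is a pullback square to replace $A'$, $\tau'$, $p$ by the correspondingly transported data---we may assume $A=\{b_j\}_{j\in I}$ with $I\subseteq J$ and $\sigma$ the literal sub-family inclusion. Lemma~\ref{TwCpullback} then pins down $A'=\{b'_{j'}\}_{j'\in q^{-1}(I)}$, with $\sigma'$ again a sub-family inclusion and $p=q|_{A'}$; in particular $\sigma'$ has the injective set map $q^{-1}(I)\hookrightarrow J'$, so both pushouts exist by Lemma~\ref{lem:twpushouts} and the remark following it (the horizontal category of $\C_p$ is a groupoid, hence has all pushouts).

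Next I would identify the pushouts and the induced map. Writing $B_0=\{b_j\}_{j\in J\setminus I}$ and $B_0'=\{b'_{j'}\}_{j'\in J'\setminus q^{-1}(I)}$, we have $B\cong A\amalg B_0$ and $B'\cong A'\amalg B_0'$; since coproducts commute with pushouts and a pushout along an identity is trivial, $C\cup_A B\cong (C\cup_A A)\amalg B_0\cong C\amalg B_0$ and likewise $C'\cup_{A'}B'\cong C'\amalg B_0'$, the canonical map $C\to C\cup_A B$ becoming the coproduct inclusion and the canonical map $B=A\amalg B_0\to C\cup_A B$ becoming $\tau\amalg\mathrm{id}_{B_0}$. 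Since $q$ carries $A'$ into $A$---it restricts there to $p$---it also carries $B_0'$ into $B_0$, so $q|_{B_0'}\colon B_0'\to B_0$ is defined, and the map of pushouts induced by the three vertical maps $r,p,q$ becomes, under these identifications, $h:=r\amalg(q|_{B_0'})\colon C'\amalg B_0'\to C\amalg B_0$. That $h$ is the induced map is immediate from the universal property of the pushout, the compatibility on the $A'$-summand of $B'$ being precisely the assertion that the left-hand square commutes.

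It then remains to verify the two claims about $h$, which is pure bookkeeping. A fiber of the set map of $h$ over a $C$-index $k$ is the fiber $r^{-1}(k)$, and one over a $B_0$-index $j$ is the fiber $q^{-1}(j)$---there are no mixed fibers, since the $C$-indices and the $B_0$-indices of $C\amalg B_0$ are disjoint---so the disjointness of distinct preimages demanded of a sub-map is exactly the disjointness already present in $r$ (resp.\ $q$), and $h$ is a sub-map; the same fiberwise description shows that if $r$ and $q$ are covering then each such fiber is a covering family, so $h$ is then a covering sub-map. The remaining hypothesis, that $p$ be covering, is automatic, since $p=\sigma^*q$ and $\sigma^*$ preserves covering sub-maps by Lemma~\ref{lem:pullandpush}(1). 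The one genuinely delicate point is the normalization in the second paragraph: one must check that replacing the data by isomorphic data really preserves the property of the right-hand square being a pullback square, and that the coproduct splittings obtained are compatible with the functorial map. An alternative that avoids this is to argue layer-by-layer over $J$, factoring the relevant morphism as in the proof of Lemma~\ref{lem:twpushouts}, at the price of heavier notation.
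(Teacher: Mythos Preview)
Your argument is correct and follows essentially the same strategy as the paper: both exploit injectivity of $\sigma$'s set map to split $B$ (and $B'$) as the image of $A$ (resp.\ $A'$) coproduct a complement $B_0$ (resp.\ $B_0'$), so that the pushout decomposes as $C\amalg B_0$ and the induced map is $r\amalg(q|_{B_0'})$. The only organizational difference is that the paper splits the diagram first and then observes that on the image piece $\sigma$ is a bijective shuffle (hence an isomorphism, reducing that piece to $r$), whereas you absorb the isomorphism up front via a normalization and then split---same content, different bookkeeping. One small wording issue: the sentence ``since $q$ carries $A'$ into $A$\ldots it also carries $B_0'$ into $B_0$'' is a non-sequitur as written; what you actually need (and already established) is that $A'=q^{-1}(A)$ on index sets, so that $B_0'=q^{-1}(B_0)$.
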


\begin{proof} 
  Consider the right-hand square.  Write $A = \SCob{a}{i}$, $B= \SCob{b}{j}$
  (and analogously for $A'$, $B'$).  Write $J = I \cup J_0$ for $J_0 = J
  \backslash \im I$; and $J' = I' \cup J_0'$, analogously.  We claim that $q$
  can be written as $q_I\cup q_0$, where $q_I:\bd \{b'_{j'}\}_{j'\in I'} &\rSub
  & \{b_j\}_{j\in I}\ed$ and $q_0:\bd\{b'_{j'}\}_{j'\in J_0'} & \rSub&
  \{b_j\}_{j\in J_0}\ed$.  Indeed, $q_I$ is well-defined because the diagram
  commutes, and $q_0$ is well-defined because the right-hand square is a
  pullback.  But then we can write the given diagram as the coproduct of two
  diagrams
\begin{diagram}[small]
C' & \lTo^{\tau'} & A' & \rTo^{\sigma'} & \{b'_{j'}\}_{j'\in I'}  &&&\initial & \lTo &  \initial & \rTo & \{b'_{j'}\}_{j'\in J_0'} \\
\dSub^r && \dSub^p && \dSub_{q_I} &&& \dSub && \dSub && \dSub_{q_0} \\
C & \lTo^\tau & A & \rTo^\sigma & \{b_j\}_{j\in I} &&& \initial & \lTo & \initial & \rTo & \{b_j\}_{j\in J_0}
\end{diagram}
As the statement obviously holds in the right-hand diagram, it suffices to
consider the case of the left-hand diagram, where $\sigma$ is bijective.  In
this case, $\sigma$ and $\sigma'$ are both isomorphisms, and so in fact the
morphism we are interested in is in fact $r$, and so the lemma is trivial.
\end{proof}

Suppose that we are given a diagram
\begin{diagram}[small]
   A' & \rTo^{\sigma'} & B' & \lSub^{f'} & C'\\
   \dSub_p && \dSub_q && \dSub_r\\
   A & \rTo^{\sigma} & B &\lSub^f & C
 \end{diagram}
Then from the definition of $\sigma^*$ and $(\sigma')^*$ it is easy to see that
we get an induced sub-map $\bd(\sigma')^*C' & \rSub & \sigma^*C\ed$, which will
be a covering sub-map if $p,q,r$ are.  The analogous statement for $\sigma_*$ is
more difficult to prove, but is also true.

\begin{lemma} \lbl{lem:covermaps}
Given a diagram
\begin{diagram}[small]
    C' & \rSub^{f'} & A' & \rTo^{\sigma'} & B' \\
    \dSub^r && \dSub_p && \dSub_q \\
     C & \rSub^{f} & A & \rTo^{\sigma} & B
  \end{diagram}
there is an induced sub-map $\bd \sigma'_* C' & \rSub & \sigma_* C\ed$, which is
a covering sub-map if $p,q,r$ are all covering sub-maps.
\end{lemma}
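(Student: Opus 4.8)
The plan is to build the map $\bd \sigma'_*C' & \rSub & \sigma_*C\ed$ by exploiting the adjunction $\sigma^*\dashv \sigma_*$ from lemma \ref{lem:pullandpush}, rather than by a direct combinatorial construction. Recall that $\sigma_*C$ is characterized (up to vertical isomorphism, which is all we need since $\Tw(\C_p)_v$ is a preorder) as the terminal object of $(\sigma^*\downarrow C)$; concretely, from the proof of lemma \ref{lem:pullandpush}(2), if $C = \SCob{c}{k}$ sits over $A=\SCob{a}{i}$ via $f$, then $\sigma_*C = \coprod_{j\in J}\bigl(\prod_{i\in\sigma^{-1}(j)}(\Sigma_i^{-1})^*C_i\bigr)$, where $C_i = \{c_k\}_{k\in f^{-1}(i)}$ and the product is taken in $(\Tw(\C_p)_v^\Sub\downarrow B)$. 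So the first step is to unwind both $\sigma_*C$ and $\sigma'_*C'$ into this explicit form.

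The main step is to produce the comparison. By the universal property of $\sigma_*C$, giving a sub-map $\sigma'_*C' \rightarrow \sigma_*C$ in $(\Tw(\C_p)_v^\Sub\downarrow B)$ is the same (after applying the adjunction, or directly since $\sigma_*C$ is terminal in $(\sigma^*\downarrow C)$) as giving a sub-map $\sigma^*\sigma'_*C' \rightarrow C$ over $A$. To obtain the latter, I would first use the square relating $\sigma$ and $\sigma'$ together with lemma \ref{TwCpullback} to get, from $q:\bd B'&\rSub& B\ed$, the comparison sub-map $\bd (\sigma')^*(\text{stuff over }B') & \rSub & \sigma^*(\text{stuff over }B)\ed$ noted in the paragraph just before the lemma; applying this to the counit $\sigma'^*\sigma'_*C' \rightarrow C'$ of the primed adjunction gives $\sigma^*\sigma'_*C' \rightarrow \sigma'^*\sigma'_*C'$-pulled-back, hence a sub-map landing near $\sigma^*$ of something over $B$. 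Composing with the pulled-back counit $\bd \sigma'^*\sigma'_*C' & \rSub & C'\ed$ and then with $r:\bd C'&\rSub& C\ed$ yields a sub-map $\sigma^*\sigma'_*C' \rightarrow C$ over $A$ (using that the outer square and pullback squares all commute, so the triangle over $A$ commutes). Transposing back across $\sigma^*\dashv \sigma_*$ gives the desired $\bd\sigma'_*C' & \rSub& \sigma_*C\ed$. One must check this is a \emph{sub-map} and not merely a vertical morphism: this follows because $\sigma_*$ preserves sub-maps (lemma \ref{lem:pullandpush}(2)) and the pullbacks of sub-maps along shuffles are sub-maps, so everything in sight stays in $\Tw(\C_p)_v^\Sub$.

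For the covering claim, suppose $p,q,r$ are covering sub-maps. Lemma \ref{lem:pullandpush} tells us $\sigma^*$ and $\sigma_*$ both preserve covering sub-maps, and the counit of $\sigma'^*\dashv\sigma'_*$ restricted to $C'$ is built from covering data; combined with the fact (stated in the run-up to the lemma) that the induced map $(\sigma')^*C'\rightarrow \sigma^*C$ is a covering sub-map when $p,q,r$ are, each factor in the composite defining $\sigma^*\sigma'_*C'\rightarrow C$ is a covering sub-map, hence so is the composite; transposing and using that $\sigma_*$ preserves covers gives that $\bd\sigma'_*C'&\rSub&\sigma_*C\ed$ is a covering sub-map. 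The main obstacle I anticipate is not the formal adjunction yoga but verifying that the explicit product-over-$\sigma^{-1}(j)$ description of $\sigma_*$ is genuinely functorial in the way required — i.e. that the comparison maps between these iterated pullbacks/products are compatible with the structure maps to $B$ and $B'$ — since $\sigma$ and $\sigma'$ need not have related fibers unless one first reduces (as in lemma \ref{lem:coverpushout}) to the cases where $\sigma$ has injective, resp. surjective, set map. So I would likely split into those two cases: when $\sigma$ is injective on sets the products are trivial and $\sigma_*$ is essentially "pad with $\initial$", making the comparison transparent; when $\sigma$ is surjective, $\sigma_*\sigma^*\cong 1$ and one can transport $r$ directly. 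A general $\sigma$ factors as surjective followed by injective, and functoriality of $\sigma\mapsto\sigma_*$ (pseudo-functoriality suffices, as we are in a preorder) lets us compose the two cases.
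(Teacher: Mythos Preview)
Your adjunction idea is natural, but the central step does not go through as written. The comparison recorded in the paragraph just before the lemma produces a sub-map
\[
(\sigma')^*X' \;\longrightarrow\; \sigma^*X
\]
(over $A$), not one in the opposite direction. Taking $X'=X=\sigma'_*C'$ you therefore obtain $(\sigma')^*\sigma'_*C' \to \sigma^*\sigma'_*C'$, together with the counit $(\sigma')^*\sigma'_*C' \to C'$ and $r:C'\to C$. This is a \emph{span} out of $(\sigma')^*\sigma'_*C'$, not a composable chain starting at $\sigma^*\sigma'_*C'$; in a preorder, knowing $(\sigma')^*\sigma'_*C' \le \sigma^*\sigma'_*C'$ and $(\sigma')^*\sigma'_*C' \le C$ gives no relation between $\sigma^*\sigma'_*C'$ and $C$. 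So the transpose you need, $\sigma^*\sigma'_*C' \to C$, is never actually produced. The same directional problem undermines the covering argument: even if you had the composite, the passage ``transpose and use that $\sigma_*$ preserves covers'' would factor through the unit $\sigma'_*C' \to \sigma_*\sigma^*\sigma'_*C'$, which you have not shown to be a covering sub-map.

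The paper does split off the part of $B$ missed by $\sigma$ (much as you suggest at the end), but in the remaining surjective case it does \emph{not} argue via the adjunction. Instead it proves the Beck--Chevalley--type equality
\[
\sigma'_*C' \;=\; \sigma_*C'
\]
(where on the right $C'$ is regarded over $A$ via $p\circ f'$) by an explicit index computation with the product formula $\sigma_*(-)=\coprod_{j}\prod_{i\in\sigma^{-1}(j)}(\Sigma_i^{-1})^*(-)_i$. Once that equality is in hand, the desired map is simply $\sigma_*$ applied to $r:C'\to C$ over $A$, and the covering statement is immediate from lemma~\ref{lem:pullandpush}(2). Your proposal never isolates this identity; ``transport $r$ directly using $\sigma_*\sigma^*\cong 1$'' is a different (and weaker) statement, and does not by itself yield $\sigma'_*C' = \sigma_*C'$.
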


\begin{proof} 
We can assume that $\sigma$'s set map is surjective, since otherwise we can write the right-hand square as the coproduct of two squares
\begin{diagram}[small]
A' & \rTo^{\sigma'} & B'_0 &\qquad& \initial & \rTo & B'_1 \\
\dSub && \dSub && \dSub && \dSub \\
A & \rTo^\sigma & B_0 && \initial & \rTo & B_1
\end{diagram}
In the right-hand case the map we are interested in is just $\bd B'_1 & \rSub &
B_1\ed$, so the result clearly holds.  So we focus on the original question when $\sigma$ has a surjective set-map.
As $(\Tw(\C_p)_v^\Sub\downarrow B)$ is a preorder and both $\sigma'_*C'$ and $\sigma_*C$ sit over $B$ it suffices to show that this morphism exists in $(\Tw(\C_p)_v^\Sub\downarrow B)$.  We claim that it suffices to show that $\sigma'_*C' = \sigma_*C'$, as if this is the case then
$$\Hom_{(\Tw(\C_p)_v^\Sub\downarrow B)}(\sigma'_*C', \sigma_*C) = \Hom_{(\Tw(\C_p)_v^\Sub\downarrow B)}(\sigma_*C', \sigma_*C) \supseteq \Hom_{(\Tw(\C_p)_v^\Sub\downarrow A)}(C', C) \neq\initial$$
so we will be done.

Write $A = \SCob{a}{i}$, $A' = \SCob{a'}{i'}$, $B = \SCob{b}{j}$, etc., and define $C'_i =
\{c'_{k'}\}_{pf'(k')=i}$ for $i\in I$ and $C'_{i'} = \{c'_{k'}\}_{f'(k') = i'}$
for $i'\in I'$.  Then we know that 
$$\sigma_*C' = \coprod_{j\in J} \prod_{i\in \sigma^{-1}(j)} \Sigma_i^*
C'_i \qquad\hbox{and}\qquad \sigma'_*C' = \coprod_{j'\in J'} \prod_{i'\in
  {\sigma'}^{-1}(j')} {\Sigma'}_{i'}^* C'_{i'}.$$ 
(Note that all of these products exist because $\C$ is vertically closed under
pullbacks, and in a preorder a pullback is the same as a product.)  Now
$$\prod_{i\in \sigma^{-1}(j)} \Sigma_i^*C_i' = \prod_{i\in \sigma^{-1}(j)}
\coprod_{i'\in p^{-1}(i)} \Sigma_{p(i')}^* C'_{i'}$$
since we know that if $p(i'_1) = p(i'_2)$ then $\sigma'(i'_1) \neq
\sigma'(i'_2)$.  But because the diagram commutes,
$$\prod_{i\in \sigma^{-1}(j)}
\coprod_{i'\in p^{-1}(i)} \Sigma_{p(i')}^* C'_{i'} = \coprod_{j'\in q^{-1}(j)}
\prod_{i'\in {\sigma'}^{-1}(j')} \Sigma_{p(i')}^* C'_{i'} = \coprod_{j'\in q^{-1}(j)}
\prod_{i'\in {\sigma'}^{-1}(j')} {\Sigma'_{i'}}^* C'_{i'}.$$
Thus 
$$\sigma_*C' = \coprod_{j\in J} \prod_{i\in \sigma^{-1}(j)} \Sigma_i^*
  C'_i = \coprod_{j\in J} \coprod_{j'\in q^{-1}(j)} \prod_{i'\in
    {\sigma'}^{-1}(j')} {\Sigma'}_{i'}^* C'_{i'} = \sigma'_*C'$$
as desired.
\end{proof}

Lastly we prove a couple of lemmas which show that covering sub-maps do not lose
any information.  The first of these shows that if two shuffles are related by
covering sub-maps then they contain the same information; the second shows that
pulling back a covering sub-map along a shuffle cannot lose information.

\begin{lemma} \lbl{lem:mapbtwncovers}
Suppose that we have the following diagram:
\bd
A' & \rTo^\tau & B' \\
\dCover^p && \dCover_q \\
A & \rTo^\sigma & B
\ed
Then this diagram is a pullback square, and $\tau$ is an isomorphism if and only if $\sigma$ is.
\end{lemma}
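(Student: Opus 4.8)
The plan is to identify the given square with the pullback square constructed in Lemma~\ref{TwCpullback}, and then read off the second assertion from the explicit description of that pullback. Concretely, apply Lemma~\ref{TwCpullback} to the cospan $A\to B\leftarrow B'$ (with set maps $\sigma$ and $q$): it yields the pullback $\sigma^*B'=\{\Sigma_i^*b'_{j'}\}_{(i,j')\in I\times_J J'}$ together with its structure maps $\sigma^*q$ (set map $\pi_1$) and $\tilde\sigma$ (set map $\pi_2$). Since the given square commutes, $(A',p,\tau)$ is an object of the comma category $(\Tw(\C_p)\downarrow(A,B'))$, so there is a canonical comparison $r\colon A'\to\sigma^*B'$ in that category, namely a sub-map with $\sigma^*q\circ r=p$ and $\tilde\sigma\circ r=\tau$. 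Since $\sigma^*B'$ is the terminal object of that comma category, it is enough to prove that $r$ is an isomorphism; the ``pullback square'' assertion follows formally.

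To prove $r$ is an isomorphism I would work layerwise. Comparing structure maps shows the set map of $r$ is $i'\mapsto(p(i'),\tau(i'))$, and its $i'$-component is a vertical map $R_{i'}\colon a'_{i'}\to\Sigma_{p(i')}^*b'_{\tau(i')}$. Two claims. \emph{(1) $r$ is a covering sub-map.} Fix $(i,j')$ and pull the covering family $\{a'_{i''}\to a_i\}_{i''\in p^{-1}(i)}$ back along the sub-map $\Sigma_i^*b'_{j'}\to a_i$ (a piece of $\sigma^*q$): pullbacks of covers are covers, and the $i''$-term is $a'_{i''}$ if $\tau(i'')=j'$ and is $\initial$ otherwise, because $\Sigma_i^*b'_{j'}$ and $\Sigma_i^*b'_{\tau(i'')}$ are disjoint pieces of the sub-map $\sigma^*q$. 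Discarding the initial objects (which preserves covers, as recorded just before Lemma~\ref{lem:pullandpush}) shows $\{a'_{i''}\to\Sigma_i^*b'_{j'}\}$ over $(i,j')$ is a cover of the polytope $\Sigma_i^*b'_{j'}$, in particular a nonempty family. \emph{(2) Each $R_{i'}$ is an isomorphism of $\C$.} The commuting square indexed by $i'$ has both horizontal edges $\Sigma'_{i'}$ and $\Sigma_{p(i')}$ invertible by axiom~(H); feeding this into axiom~(P) — using that $\Sigma_{p(i')}$ induces an \emph{equivalence} of slice categories, not merely the unique-lift clause — identifies $(a'_{i'},P_{i'})$ with $\Sigma_{p(i')}^*(b'_{\tau(i')},Q_{\tau(i')})$ inside $(\C_v\downarrow a_{p(i')})$, which is exactly the statement that $R_{i'}$ is invertible. (Concretely: transport $(a'_{i'},P_{i'})$ back along $\Sigma_{p(i')}^{-1}$, compose the canonical lift of axiom~(P) with $\Sigma'_{i'}$ to obtain a horizontal isomorphism lying over an identity vertical map, and observe that such an isomorphism witnesses an isomorphism of slice objects.) Granting (1) and (2), for each $(i,j')$ the fibre $\{a'_{i''}\to\Sigma_i^*b'_{j'}\}$ is a disjoint covering family one of whose members is an isomorphism onto $\Sigma_i^*b'_{j'}$; disjointness forces every other member to be $\initial$, hence (polytopes being noninitial) the fibre is a singleton with invertible component. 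So $r$ is an isomorphism.

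For the last clause: by the above $A'\cong\sigma^*B'$ compatibly with all structure, so the set map of $\tau$ equals $\pi_2\colon I\times_J J'\to J'$ up to the bijection $i'\mapsto(p(i'),\tau(i'))$, and by axiom~(H) a shuffle is an isomorphism exactly when its set map is a bijection. If $\sigma$ is an isomorphism then base change makes $\pi_2$ a bijection, whence $\tilde\sigma$ — and therefore $\tau$ — is an isomorphism (this is the remark after Lemma~\ref{TwCpullback}). Conversely, if $\tau$ is an isomorphism then $\pi_2$ is a bijection; since $q$ is a covering sub-map of polytopes its set map is surjective (each fibre covers a noninitial object, hence is nonempty), so for each $j\in J$ picking $j'$ with $q(j')=j$ gives $|\sigma^{-1}(j)|=|\{i:\sigma(i)=q(j')\}|=1$; thus the set map of $\sigma$ is a bijection and $\sigma$ is an isomorphism.

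The main obstacle is step~(2): extracting the vertical identification $R_{i'}$ from the purely horizontal data in the $i'$-square. This is the one place where one needs the full strength of axiom~(P) — that a shuffle induces an \emph{equivalence} of slice categories, not just the unique-lift clause — together with the (mild but not entirely formal) fact that a horizontal isomorphism sitting over an identity vertical map witnesses an isomorphism of the relevant slice objects. Everything else is bookkeeping with the inherited Grothendieck topology and the preorder structure of $\C_v$.
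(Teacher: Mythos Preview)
Your argument is correct and follows the paper's overall route: produce the comparison $r\colon A'\to\sigma^*B'$ to the pullback of Lemma~\ref{TwCpullback}, show $r$ is an isomorphism, and then read off the second clause from the underlying set maps (using that the set map of a covering sub-map onto polytopes is surjective). The organisation of the middle step differs. The paper argues in one line by contradiction: if $r$ were not an isomorphism there would be a non-invertible component $R_{i'}\colon a'\to a$, and the commuting square it sits in (with the vertical identity on the $B'$ side) would exhibit that non-invertible map as the pullback of an identity along a horizontal morphism, contradicting that $\Sigma^*$ sends identities to identities. You instead first prove that $r$ is a covering sub-map (Claim~(1)) and separately that each $R_{i'}$ is invertible (Claim~(2)), then combine via disjointness to force the set map of $r$ to be a bijection. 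This buys you an honest treatment of the set-map bijectivity of $r$, a point the paper's contradiction leaves tacit; conversely, the paper's reduction to ``pullback of the identity is the identity'' is a slightly crisper way to extract Claim~(2) from axiom~(P) than your slice-transport sketch, though both rest on the same use of the equivalence clause of~(P).
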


\begin{proof}
Pulling back $q$ along $\sigma$ gives us a commutative square
\bd A' & \rTo^\tau & B' \\ \dCover^r && \dSub_\cong \\ \sigma^*B' &
\rTo^{\sigma'} & B'
\ed
so it suffices to show that in any such diagram $r$ is an isomorphism.  Suppose it were not.  Then there would exist $a'\in A'$ and an $a\in \sigma^*B'$ such that we have a non-invertible vertical morphism $\bd a' & \rSub & a\ed$, and horizontal morphisms $F_a:\bd a & \rTo & b\ed$ (for some $b\in B'$) such that the pullback of the vertical identity morphism on $b$ is the non-invertible morphism $\bd a'&\rSub & a\ed$.  Contradiction.  Thus $r$ must be an isomorphism, and we are done with the first part.

If $\sigma$ is an isomorphism then any pullback of it will also be an
isomorphism, so if $\sigma$ is an isomorphism then so is $\tau$.  Now suppose
that $\tau$ is an isomorphism.  As any shuffle with bijective set map is an
isomorphism it suffices to show that $\sigma$ has a bijective set map.  However,
as this is a pullback square on the underlying set maps we can just consider it
there. $q$ has a surjective set map (as it is a cover) and the pullback of
$\sigma$ along $q$ is a bijection, so $\sigma$ must also be a bijection, and we
are done.

\end{proof}

\subsection{Checking the axioms}

We now verify that our definition of $\SC(\C)$ works and then check the axioms for it to be a Waldhausen category.  First we check that all of our definitions are well-defined.

\begin{lemma}
  $\SC(\C)$ is a category, and the cofibrations and weak equivalences form subcategories of $\SC(\C)$.
\end{lemma}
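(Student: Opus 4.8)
The plan is to verify the category axioms for $\SC(\C)$ — composition well defined, associative, and unital — and then to check that the cofibrations and weak equivalences each contain all identity morphisms and are closed under composition. Only well-definedness and associativity of composition require real work; the rest is bookkeeping.

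For well-definedness, take $f\colon A\to B$ and $g\colon B\to C$ with chosen representatives $A\xleftarrow{p}A'\xrightarrow{\sigma}B$ and $B\xleftarrow{q}B'\xrightarrow{\tau}C$, so that the prescribed composite is the span $A\xleftarrow{p\circ\sigma^*q}\sigma^*B'\xrightarrow{\tau\circ\tilde\sigma}C$, with $\sigma^*B'$, $\sigma^*q$, and $\tilde\sigma$ furnished by Lemma~\ref{TwCpullback}. I would first observe that this is a morphism of $\SC(\C)$: $\sigma^*q$ is a sub-map by Lemma~\ref{lem:pullandpush}(1), sub-maps are closed under composition (within a fibre of $p\circ\sigma^*q$, two distinct indices either already lie over a common index one level down, where their pullback is $\initial$ by hypothesis, or lie in distinct layers there, where their pullback maps to the pullback of those layers, which is $\initial$), and $\tau\circ\tilde\sigma$ is a shuffle. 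Independence of the chosen pullback is automatic, since $\sigma^*B'$ is a terminal object of the comma category of Lemma~\ref{TwCpullback} and hence unique up to a unique compatible isomorphism. Independence of the chosen representatives of $f$ and $g$ is then obtained by transporting the witnessing isomorphism through that universal property: a vertical isomorphism $A_1'\to A_2'$ (resp.\ $B_1'\to B_2'$) respecting the span data induces an equivalence between the relevant comma categories of Lemma~\ref{TwCpullback} — using axiom (P) to handle the horizontal legs in the second case — and hence a vertical isomorphism $\sigma^*B_1'\to\sigma^*B_2'$ compatible with both projections and both horizontal components, which is precisely the datum exhibiting the two composite spans as equivalent. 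Unitality is immediate from the observation following Lemma~\ref{TwCpullback} that a pullback along an isomorphism, or of an isomorphism, is again an isomorphism: the span $A\xleftarrow{1}A\xrightarrow{1}A$ acts as a two-sided identity.

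For associativity, given composable $f,g,h$ both bracketings of the triple composite produce a span $A\leftarrow W\rightarrow D$, so it suffices to exhibit a compatible vertical isomorphism between the two apexes. This reduces to a pasting law for the pullback squares of Lemma~\ref{TwCpullback}: pulling a shuffle back along a composite $\tau\circ\tilde\sigma$ is canonically the same as pulling it back first along $\tilde\sigma$ and then along $\tau$, compatibly with the sub-maps being composed. I would establish this directly from the explicit formula $\sigma^*B'=\{\Sigma_i^*b'_{j'}\}_{(i,j')\in I\times_J J'}$ in the proof of Lemma~\ref{TwCpullback}: on indexing sets the comparison is the ordinary pasting of pullbacks of finite sets, and on $\C$-components it follows, by axiom (P), from the fact that pulling a vertical morphism back along a composite of horizontal morphisms agrees with pulling back iteratively. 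I expect this coherence check to be the main obstacle — not because any individual step is hard, but because several canonical isomorphisms must be tracked at once and shown to assemble correctly, whereas everything else is formal manipulation of spans.

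Finally, for the two distinguished classes of maps: for each object $X$ the span $X\xleftarrow{1}X\xrightarrow{1}X$ has $p=1$, a covering sub-map since the one-element identity family is a covering family in any Grothendieck topology, and $\sigma=1$, with bijective set map; hence every identity is both a cofibration and a weak equivalence. For closure under composition, keep the notation above. If $q$ is a covering sub-map then so is $\sigma^*q$ by Lemma~\ref{lem:pullandpush}(1), and covering sub-maps compose because the topology induced on $\Tw(\C_p)_v$ satisfies the transitivity axiom; hence $p\circ\sigma^*q$ is a covering sub-map whenever $p$ and $q$ are. For the set-map conditions, recall from the proof of Lemma~\ref{TwCpullback} that $\tilde\sigma$ has set map the second projection $I\times_J J'\to J'$, whose fibre over $j'$ has cardinality $|\sigma^{-1}(q(j'))|$; if $\sigma$ has injective (resp.\ bijective) set map this cardinality is at most one (resp.\ exactly one), so $\tilde\sigma$ has injective (resp.\ bijective) set map, and composing with the set map of $\tau$ preserves the property. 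Thus a composite of cofibrations is a cofibration and a composite of weak equivalences is a weak equivalence, and both classes form subcategories of $\SC(\C)$.
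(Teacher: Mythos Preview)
Your proposal is correct and follows essentially the same route as the paper. The paper's proof of well-definedness is phrased as an explicit construction of reindexings (pull back $\iota_A$ along $q_2'$, then pull back $\iota_B$ along the result composed with $\sigma_2'$, then compare with $\sigma_1^*B_1'$ via uniqueness of the pullback), which is exactly the ``transport through the universal property'' you describe; and the closure of cofibrations and weak equivalences under composition is argued identically, via Lemma~\ref{lem:pullandpush}(1) for covers and the formula of Lemma~\ref{lem:pullbacks} for set maps. The one difference is that you explicitly address associativity and unitality via the pasting law for the pullback squares of Lemma~\ref{TwCpullback}, whereas the paper silently takes these for granted once well-definedness is established; your treatment is the more careful one here, and the pasting argument you sketch is correct.
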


\begin{proof}
We need to check that composition is well-defined.  Suppose that we are given morphisms $f:A\rightarrow B$ and $g:B\rightarrow C$ in $\SC(\C)$, and suppose that we are given two different diagrams representing each morphism.  Then we have the following diagram, where the top and bottom squares are pullbacks:
\begin{diagram}[small]
&&&& \sigma_1^*B'_1 \\
&& A'_1 &\ldSub(2,1)^{q'_1} && \rdTo(2,1)^{\sigma'_1} & B'_1\\
A & \ldSub(2,1)^{p_1} && \rdTo(2,1)^{\sigma_1} &B & \ldSub(2,1)^{q_1} && \rdTo(2,1)^{\tau_1} & C \\
& \luSub(2,1)_{p_2} & A'_2 & \ruTo(2,1)_{\sigma_2} & & \luSub(2,1)_{q_2} & B'_2 & \ruTo(2,1)_{\tau_2} \\
&& & \luSub(2,1)_{q_2'} & \sigma_2^*B'_2 & \ruTo(2,1)_{\sigma'_2}
\end{diagram}
As each vertical section represents the same map we have reindexings $\iota_A:A_1\rightarrow A_2$ and $\iota_B:B_1\rightarrow B_2$; we need to show that we therefore have a reindexing $\sigma_1^*B_1'\rightarrow \sigma_2^*B_2'$.  It is easy to see that pulling back a reindexing along either a sub-map or a shuffle produces another reindexing.  Thus if we pull back $\iota_A$ along $q_2'$ to get a morphism $\iota_A'$, and then pull back $\iota_B$ along $\iota_A'\sigma_2'$ to get $\iota_B'$ we get a diagram
\begin{diagram}[small]
&&&& \sigma_1^*B_1' \\
A & \lSub & A_1' & \rTo \ldSub(2,1) & B & \lSub \rdTo(2,1) & B_1' & \rTo & C \\
&&& \luSub(2,1) & X & \ruTo(2,1)
\end{diagram}
where $\iota_2'\iota_1':X\rightarrow\sigma_2^*B_2'$.  However, as both the upper
and lower squares are pullbacks they are unique up to unique vertical
isomorphism, so we obtain a vertical isomorphism $\bd X & \rSub & \sigma_1^*
B_1'\ed$, and we are done.

It remains to show that weak equivalences and cofibrations are preserved by composition.  Consider a composition of morphisms determined by the following diagram:
\begin{diagram}[small]
&&&& \sigma^*B' \\
&& A' &\ldSub(2,1)^{q'} && \rdTo(2,1)^{\sigma'} & B'\\
A & \ldSub(2,1)^{p} && \rdTo(2,1)^{\sigma} &B & \ldSub(2,1)^{q} && \rdTo(2,1)^{\tau} & C 
\end{diagram}
If $q$ is a cover then so is $q'$, so if both $p$ and $q$ are covers then $q'p$ is also a cover.  From the formula in lemma \ref{lem:pullbacks} it is easy to see that if a shuffle has an injective (resp. bijective) set map then so will its pullback, so if both $\sigma$  and $\tau$ are injective (resp. bijective) then $\tau\sigma'$ will be as well.  Thus cofibrations and weak equivalences form subcategories, as desired.
\end{proof}

\begin{lemma}
 Any isomorphism is both a cofibration and a weak equivalence.
\end{lemma}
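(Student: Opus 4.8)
The plan is to reduce the statement to the single claim that every isomorphism of $\SC(\C)$ is a weak equivalence. This reduction is immediate: a weak equivalence is witnessed by a representing diagram $\bd A & \lSub^p & A' & \rTo^\sigma & B\ed$ in which $p$ is a covering sub-map and $\sigma$ has a bijective set map, and since a bijective set map is in particular injective, while the condition on $p$ is the same in both definitions, the very same diagram exhibits the morphism as a cofibration. (Note also that two representing diagrams for one morphism differ by a vertical reindexing isomorphism, so the properties ``$p$ is a covering sub-map'' and ``$\sigma$ has a bijective (or injective) set map'' do not depend on the choice of representative.)

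So suppose $f\colon A\to B$ is an isomorphism with inverse $g\colon B\to A$, and fix representatives $\bd A & \lSub^p & A' & \rTo^\sigma & B\ed$ of $f$ and $\bd B & \lSub^q & B' & \rTo^\tau & A\ed$ of $g$. Using Lemma~\ref{TwCpullback} to form the pullbacks $\sigma^*B'$ and $\tau^*A'$, the composition rule of $\SC(\C)$ represents $g\circ f$ by $\bd A & \lSub^{p\circ\sigma^*q} & \sigma^*B' & \rTo^{\tau\circ\tilde\sigma} & A\ed$ and $f\circ g$ by $\bd B & \lSub^{q\circ\tau^*p} & \tau^*A' & \rTo^{\sigma\circ\tilde\tau} & B\ed$. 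The first thing I would do is pass to underlying set maps: assigning to a morphism of $\SC(\C)$ its underlying span of finite sets is compatible with composition, because the set map of a pullback in $\Tw(\C_p)$ is the pullback of set maps; and each of the two composites above must be equivalent to the identity diagram on $A$, resp.\ on $B$. Running the resulting bookkeeping with these two pullbacks of finite sets forces the set maps of all four morphisms $p,\sigma,q,\tau$ to be bijections. In particular $\sigma$ is a shuffle with bijective set map, hence a horizontal isomorphism of $\Tw(\C_p)$.

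It remains only to see that $p$ is a covering sub-map. Here I would return to the equation $g\circ f=\mathrm{id}_A$ in $\SC(\C)$. Since the representing diagram above for $g\circ f$ is equivalent to $\bd A & \lSub^{\mathrm{id}} & A & \rTo^{\mathrm{id}} & A\ed$, there is a vertical reindexing isomorphism $\iota\colon\sigma^*B'\to A$ with $\iota = p\circ\sigma^*q$; thus the composite sub-map $\bd\sigma^*B' & \rSub & A' & \rSub & A\ed$ is a vertical isomorphism. Because $\C_v$ is a preorder, and because the index maps occurring here are bijections by the previous paragraph, each component $\bd a'_{i'} & \rSub & a_i\ed$ of $p$ is an inclusion sitting between two objects whose composite inclusion is an isomorphism, and is therefore itself an isomorphism of $\C_v$. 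A one-element family consisting of an isomorphism is a covering family in any Grothendieck topology, so $p$ is a covering sub-map. Hence $f$ is represented by a diagram with $p$ a covering sub-map and $\sigma$ bijective, so $f$ is a weak equivalence and, by the first paragraph, also a cofibration.

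The step I expect to be the main obstacle is the set-map bookkeeping in the second paragraph. One must work with both equations $g\circ f=\mathrm{id}$ and $f\circ g=\mathrm{id}$ simultaneously, since neither alone pins down all four set maps; the underlying content is the elementary fact that an invertible span of finite sets is a span of bijections, but threading this through the equivalence relation on diagrams of $\SC(\C)$ requires a little care. Everything else is formal and uses only that $\C_v$ is a preorder together with the Grothendieck-topology axioms.
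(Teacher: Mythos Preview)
Your argument is correct, and the overall strategy---analyze representing diagrams of $f$, $g$, and their composites---matches the paper's. The execution of the key step differs, though. The paper observes directly that $p\circ\sigma^*q$ being a vertical isomorphism forces $p$ itself to be an isomorphism in $\Tw(\C_p)_v$; this implicitly uses the sub-map condition, since once one preimage $a'_{i'}$ of $a_i$ is isomorphic to $a_i$, disjointness forbids any other preimage (else $a'_{i''}\cong a'_{i'}\times_{a_i}a'_{i''}=\initial$, contradicting that $a'_{i''}$ is a polytope). Having replaced the representative by one with $p=1_A$, and symmetrically $q=1_B$, the paper then reads off that $\sigma$ and $\tau$ are inverse horizontal isomorphisms. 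You instead first pass to underlying spans of finite sets and use \emph{both} equations $gf=1$ and $fg=1$ to force all four set maps to be bijections, and only then invoke the preorder structure of $\C_v$ (together with the topology axiom that a singleton isomorphism covers) to conclude that $p$ is a covering sub-map. Your route avoids appealing to the disjointness built into sub-maps, at the cost of the extra finite-set bookkeeping you flag; the paper's route is shorter but leans on the sub-map condition in a way it does not spell out.
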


\begin{proof}
Suppose that $f:A\rightarrow B$ is an isomorphism with inverse $g:B\rightarrow A$.  In $\Tw(\C_p)$ $f$ and $g$ are represented by diagrams
\begin{diagram}
A & \lSub^p & A' & \rTo^\sigma & B &\qquad & B & \lSub^q & B' & \rTo^\tau & A 
\end{diagram}
respectively.  As $gf=1_A$ we must have $p\circ\sigma^*(q)$ be invertible, so in particular $p$ must be an isomorphism; thus we can pick a diagram representing $f$ such that $p = 1_A$ (which is in particular a covering sub-map).  Applying the analogous argument to $g$ we can see that we can pick a diagram representing $g$ such that $q = 1_B$.  In that case, it is easy to see that we must have $\tau = \sigma^{-1}$ in $\Tw(\C_p)_h$, so $\sigma$ and $\tau$ must be invertible.  From this we see that any isomorphism is both a cofibration and a weak equivalence, as desired.
\end{proof}

Now we move on to proving some of the slightly more complicated axioms defining a Waldhausen category.  We check that pushouts along cofibrations exist, and that they preserve cofibrations.  In fact, in $\SC(\C)$ pushouts not only preserve cofibrations; they also preserve weak equivalences.

\begin{lemma} \lbl{lem:pushouts}
 Given any diagram
\begin{diagram}
C & \lTo & A & \rCofib^f & B
\end{diagram}
the pushout $C\cup_A B$ of this diagram exists, and the morphism $C\rightarrow C\cup_A B$ is a cofibration.  If $f$ were also a weak equivalence, then this map would also be a weak equivalence.
\end{lemma}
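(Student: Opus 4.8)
The plan is to reduce an arbitrary cofibration to two elementary cases by factoring it in $\SC(\C)$. A cofibration $f:\bd A & \rCofib & B\ed$ represented by $\bd A & \lSub^p & A' & \rTo^\sigma & B\ed$ factors as $f = [\sigma]\circ\overline p$, where $\overline p:A\to A'$ is the morphism represented by $\bd A & \lSub^p & A' & \rTo^1 & A'\ed$ and $[\sigma]:A'\to B$ the one represented by $\bd A' & \lSub^1 & A' & \rTo^\sigma & B\ed$. Since $p$ is a covering sub-map, $\overline p$ is simultaneously a cofibration and a weak equivalence; since $\sigma$ has an injective set map, $[\sigma]$ is a cofibration; and $f$ is a weak equivalence precisely when $[\sigma]$ is, i.e.\ when $\sigma$ has a bijective set map. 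As cofibrations are closed under composition and a pushout along $[\sigma]\circ\overline p$ may be computed as a pushout along $\overline p$ followed by a pushout along $[\sigma]$, it suffices to construct the pushout along each of the two shapes $\overline p$ and $[\sigma]$, check that the new leg is a cofibration, and check that it is a weak equivalence whenever the pushed morphism is.

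For $[\sigma]:A'\to B$ with $\sigma$ injective on sets, write $B = B_1\amalg B_0$ with $B_1$ indexed by the image of $\sigma$'s set map; then $\sigma$ restricts to a shuffle $A'\to B_1$ with bijective set map, which is an isomorphism of $\SC(\C)$, so $[\sigma]$ is isomorphic to the coproduct inclusion $\bd A' & \rTo & A'\amalg B_0\ed$. Since $\SC(\C)$ has finite coproducts, computed as disjoint unions of families just as in $\Tw(\C_p)$, the pushout of $\bd D_1 & \lTo & A' & \rTo & A'\amalg B_0\ed$ is $D_1\amalg B_0$, whose leg $\bd D_1 & \rTo & D_1\amalg B_0\ed$ is again a coproduct inclusion and hence a cofibration; if moreover $\sigma$ has a bijective set map then $B_0 = \initial$, so $[\sigma]$ is an isomorphism and the leg is an isomorphism. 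This disposes of the $[\sigma]$ case.

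For $\overline p:A\to A'$ (the reverse of a covering sub-map $p:\bd A' & \rCover & A\ed$) and an arbitrary $g:A\to C$ represented by $\bd A & \lSub^r & A'' & \rTo^\tau & C\ed$, the pushout is obtained by transporting the subdivision $p$ along $g$. Pulling $p$ back along $r$ produces a pullback square with legs $\hat r : A'\times_AA''\to A'$ (a sub-map) and $r^*p : A'\times_AA''\to A''$ (a covering sub-map, by lemma \ref{lem:remsource} and the fact that pullbacks of covering sub-maps are covering sub-maps). Pushing $r^*p$ forward along $\tau$ via the functor $\tau_*$ of lemma \ref{lem:pullandpush}(2) yields a covering sub-map $q:\bd D & \rCover & C\ed$ with $D := \tau_*(A'\times_AA'')$. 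Set $C\cup_AA' := D$; the leg $C\to D$ is $\overline q$ (represented by $\bd C & \lSub^q & D & \rTo^1 & D\ed$), which is simultaneously a cofibration and a weak equivalence. The leg $\phi : A'\to D$ is the morphism represented by the span $\bd A' & \lSub & \tau^*D & \rTo & D\ed$ whose left map is the composite of $\hat r$ with the counit $\varepsilon : \tau^*D = \tau^*\tau_*(A'\times_AA'')\to A'\times_AA''$ of the adjunction $\tau^*\dashv\tau_*$, and whose right map is the canonical shuffle $\tau^*D\to D$ obtained by pulling $q$ back along $\tau$ (lemma \ref{TwCpullback}). Commutativity of the square $\overline q\circ g = \phi\circ\overline p$ is a direct unwinding of the composition law in $\SC(\C)$, using the pullback identities $p\circ\hat r = r\circ(r^*p)$ and $\tau^*q = (r^*p)\circ\varepsilon$.

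The main obstacle is the universal property of $D$ in this $\overline p$ case. Given a cone $\bd C & \rTo^u & E & \lTo^v & A'\ed$ with $u\circ g = v\circ\overline p$, one must produce a unique $w:D\to E$ with $w\circ\overline q = u$ and $w\circ\phi = v$. The strategy is first to decompose the square representing $g$ as a coproduct of two squares, one over the image of $\tau$'s set map and one over its complement, exactly as in the proofs of lemmas \ref{lem:coverpushout} and \ref{lem:covermaps}; the complementary square contributes only a coproduct summand (handled as in the $[\sigma]$ case), which reduces the problem to the case where $\tau$ has a surjective set map, so that $\tau^*\tau_*\cong 1$ by lemma \ref{lem:pullandpush}(2). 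In that case, unwinding the relation $u\circ g = v\circ\overline p$ through the composition law forces the spans representing $u$ and $v$ onto a common refinement from which $w$ is read off directly, while lemmas \ref{lem:mapbtwncovers} and \ref{lem:covermaps} guarantee uniqueness. Carrying out this reduction and the attendant bookkeeping is lengthy but entirely mechanical, which is why it belongs among the technical results of this section.
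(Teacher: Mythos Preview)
Your approach is sound and genuinely different from the paper's. You factor the cofibration as $[\sigma]\circ\overline p$ and build the pushout in two stages; the paper instead constructs it in one pass. After subdividing all three objects (using both $\tau_*$ and $\sigma_*$) to obtain $C'$, $A'''$, $B'$ related by pure shuffles, the paper takes the \emph{horizontal} pushout $C'\cup_{A'''}B'$ in $\Tw(\C_p)_h$ via lemma \ref{lem:twpushouts}, and then verifies the universal property with a single appeal to lemma \ref{lem:coverpushout}, which manufactures the comparison sub-map into any competing cone. Your $[\sigma]$ case is clean and your candidate in the $\overline p$ case, $D=\tau_*(A'\times_A A'')$, agrees with the paper's $C'$ when $\sigma=1$, so the two constructions coincide on objects.

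What each approach buys: your factorisation makes the preservation of weak equivalences transparent (each factor visibly preserves them), and it avoids ever invoking horizontal pushouts in $\Tw$. The paper's one-shot construction, by contrast, packages the universal property into the ready-made lemma \ref{lem:coverpushout}, which is exactly the tool that lets it dispatch existence and uniqueness of the factoring map in one line. In your $\overline p$ case you lose that packaging, and your closing paragraph defers the verification to ``lengthy but entirely mechanical'' bookkeeping; this is the one place I would ask for more. Uniqueness is in fact easy once you observe that any sub-map in $\Tw(\C_p)_v^\Sub$ is a monomorphism (distinct indices in a fibre are disjoint, so a polytope cannot sit under two of them), hence $\overline q$ is an epimorphism in $\SC(\C)$ and $w$ is determined by $w\circ\overline q=u$. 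Existence, however, really does require writing down $w$, and the appeal to lemmas \ref{lem:mapbtwncovers} and \ref{lem:covermaps} is not quite on point for that; a cleaner route is to note that $u$ is represented by some $\bd C & \lSub^s & C'' & \rTo^\rho & E\ed$, pull $s$ back along $q$ to obtain the sub-map part of $w$, and then check compatibility with $v$ using the adjunction $\tau^*\dashv\tau_*$ in the surjective case.
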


\begin{proof}
The diagram above is represented by the following diagram in $\Tw(\C_p)$:
\begin{diagram}[small]
&& A'' && && A' \\
C & \ldTo(2,1)^\tau && \rdSub(2,1)^q &A& \ldCover(2,1)^p && \rdTo(2,1)^\sigma & B
\end{diagram}
Complete the middle part of this diagram to a pullback square
\begin{diagram}[small]
\tilde A & \rSub^{q'} & A' \\
\dCover^{p'} && \dCover^p\\
A'' & \rSub^q & A
\end{diagram}
Now we define $s:\bd C'& \rCover & C\ed$ to be $\tau_*(q')$.  As $\tau^*\tau_*(p')$ must factor through $p'$, $q\circ \tau^*\tau_*(p')$ must factor through $p$ (as $\Tw(\C_p)_v^\Sub$ is a preorder), so we can write $q\circ \tau^*\tau_*(p') = p\circ r'$; we define $r:\bd B' & \rSub & B\ed$ to be $\sigma_*(r')$.  (Note that if $\tau$ has an injective set map then $r = \sigma_*(q')$.)  Now we have the following diagram in $\Tw(\C_p)$:
\begin{diagram}[small]
C' & \lTo^{\tau'} &A ''' & \rEqual & A''' & \rTo^{\sigma'} & B' \\
\dCover^s && \dCover && \dSub& & \dSub^{r} \\
C & \lTo^\tau & A'' && A' & \rTo^\sigma & B
\end{diagram}
where both squares are pullback squares.  The top row of this diagram consists
only of maps in $\Tw(\C_h)$.  As $\C_h$ is a groupoid it in particular has all
pushouts, and so by lemma \ref{lem:twpushouts} the pushout $C' \cup_{A'''} B'$
exists in $\Tw(\C_p)$; we claim that gives us the pushout of the original
diagram.  Note that the set-map of the shuffle $\bd C' & \rTo & C'\cup_{A'''}
B'\ed$ will be injective (and bijective, if $\sigma$'s was) because it is the
pushout of $\sigma'$, so the pushout of a cofibration (resp. weak equivalence)
is another cofibration (resp. weak equivalence).

To check that this is indeed the pushout, suppose that we are given any commutative square 
\begin{diagram}[small]
A& \rCofib^f & B\\
\dTo && \dTo \\
C & \rTo & D
\end{diagram}
The diagonal morphism $A\rightarrow D$ is represented by a diagram $\bd A & \lSub^t & \hat A & \rTo^\rho & D\ed$ in $\Tw(\C_p)$.  Considering the composition around the top, we see that $\hat A$ factors through $p$, and considering the composition around the bottom it must factor through $q$.  In addition, as $t$ comes from the bottom composition we know that $\sigma^*\sigma_*t = t$ and thus $t$ must factor through $A'''$.  We can now apply lemma \ref{lem:coverpushout} to see that we indeed get a unique factorization through our pushout, as desired.
\end{proof}

We have now shown that $\SC(\C)$ is a category with cofibrations which is equipped with a subcategory of weak equivalences, and we move on to proving that all cofibration sequences split canonically.  Given a cofibration $f:A\hookrightarrow B$ we say that the \textsl{cofiber} of $f$ is the pushout of $f$ along the morphism $A\rightarrow *$.  We will denote such a map by
\begin{diagram} B & \rFib & B/A\end{diagram}

\begin{corollary} \lbl{cor:fibersection}
Any cofiber map has a canonical section; this section is a cofibration.
\end{corollary}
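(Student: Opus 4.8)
The plan is to read off the cofiber map and its section from the pushout construction in the proof of Lemma~\ref{lem:pushouts}. Fix a representative $\bd A & \lCover^p & A' & \rTo^\sigma & B\ed$ of the cofibration $f$, so $p$ is a covering sub-map and $\sigma$ has injective set map; write $B = \SCob{b}{j}$ and $J_0 = J\backslash\im\sigma$. The map $A\to *$ is represented by $\bd A & \lSub^q & * & \rTo & *\ed$, where $q$ is the unique sub-map (its set map is $\varnothing\to J$). Running the construction of Lemma~\ref{lem:pushouts} on $\bd * & \lTo & A & \rCofib^f & B\ed$: the central pullback $\tilde A = *\times_A A'$ has empty indexing set, so $\tilde A = *$, and consequently $C'$ and $A'''$ are also $*$, the pushout is $C'\cup_{A'''}B' = B'$, and one computes $B' = \sigma_*(\text{the empty subobject of }A')$, which (using the note in the proof of Lemma~\ref{lem:pullandpush} that $\sigma^{-1}(j)=\initial$ makes the fibre product become $\{b_j\}$) is the object $\{b_j\}_{j\in J_0}$ --- more precisely $\{b'_j\}_{j\in J}$ with $b'_j=b_j$ for $j\in J_0$ and $b'_j=\initial$ otherwise, these being isomorphic in $\SC(\C)$. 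Thus $B/A \cong \{b_j\}_{j\in J_0}$, and the cofiber map $q_{B/A}:\bd B & \rFib & B/A\ed$ is represented by $\bd B & \lSub^r & B' & \rEqual & B/A\ed$, where $r$ is the evident sub-map (identity on the $J_0$-components and $\bd\initial & \rSub & b_j\ed$ on the rest). I would point out in passing that $r$ is \emph{not} a covering sub-map --- on a component $b_j$ with $j\in\im\sigma$ the relevant family of sub-maps is just $\{\bd\initial & \rSub & b_j\ed\}$, which does not cover the polytope $b_j$ --- so $q_{B/A}$ is a genuine pure sub-map, not a cofibration.

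Next I would exhibit the section. Define $s:\bd B/A & \rTo & B\ed$ to be the morphism represented by $\bd B/A & \lSub^{1} & B/A & \rTo^\iota & B\ed$, where $\iota$ has set map the inclusion $J_0\hookrightarrow J$ and identity $\C_h$-components. Its sub-map component is an identity, hence a covering sub-map, and $\iota$ has injective set map, so $s$ is a cofibration straight from the definition of $\SC(\C)$. To see that $s$ is a section I would compute $q_{B/A}\circ s$ with the composition rule of $\SC(\C)$: it is represented by $\bd B/A & \lSub^{\iota^* r} & \iota^*B' & \rTo^{\tilde\iota} & B/A\ed$, and by the explicit pullback formula of Lemma~\ref{TwCpullback} the object $\iota^*B'$ has indexing set $J_0\times_J J_0 = J_0$ and is canonically $\{b_j\}_{j\in J_0} = B/A$ again, with $\iota^* r$ an identity and $\tilde\iota$ the identity shuffle; hence $q_{B/A}\circ s = 1_{B/A}$, as needed.

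Canonicity is then automatic: $B/A$, the cofiber map $q_{B/A}$, and the section $s$ are all determined by the pushout $*\cup_A B$ and its universal property, so replacing $f$ by another representing diagram (or $B/A$ by another choice of pushout) changes all of this data only by the unique coherent isomorphism. The main obstacle in carrying this out is the first step: unwinding the proof of Lemma~\ref{lem:pushouts}, with its nested pullbacks and the adjoints $\tau_*,\sigma_*$, down to the clean statement ``the cofiber map simply deletes the components of $B$ lying in $\im\sigma$''. Once that is established, the section is visibly a cofibration and the section identity is a one-line pullback computation. One should also be mildly careful with the bookkeeping around initial components (identifying $\{\initial\}\amalg M$ with $M$ in $\SC(\C)$, and confirming that a single inclusion $\bd\initial & \rSub & b\ed$ into a polytope is not a covering family), but this is routine given axioms (V) and (B).
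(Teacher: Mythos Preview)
Your argument is correct and follows essentially the same route as the paper's own proof: both read off from the pushout construction of Lemma~\ref{lem:pushouts} that the cofiber map is represented by
\[
\begin{diagram} B & \lSub & \{b_j\}_{j\in J'} & \rTo^{\sigma} & B/A \end{diagram}
\]
with $J'\subseteq J$ and $\sigma$ a bijective-set-map shuffle, and then take the section to be the pure shuffle $B/A\to B$ obtained by inverting $\sigma$ and including $J'$ into $J$. Your version simply unwinds the computation more explicitly (specialising $C=A''=\tilde A=*$ and computing $\sigma_*$ of the empty sub-object via Lemma~\ref{lem:pullandpush}), and you are right to flag that the sub-map component of the cofiber representation is \emph{not} a covering sub-map in general---the paper's use of the cover-arrow there is a small notational slip which does not affect the argument. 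Your canonicity paragraph is adequate for well-definedness, though the paper is slightly more precise in noting that canonicity holds only in the twisted arrow category of cofibrations, not in the ordinary arrow category.
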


\begin{proof}
  Suppose that we are given a cofiber map $\bd B & \rFib & B/A\ed$.  Suppose
  that this map is represented by the diagram
  \begin{diagram}
    B & \lCover^p & B' & \rTo^\sigma & B/A
  \end{diagram}
  From the computation in the proof of lemma \ref{lem:pushouts} it is easy to
  see that if we write $B = \SCob{b}{j}$ then $B' = \{b_j\}_{j\in J'}$ where
  $J'\subseteq J$ and $\sigma$ has a bijective set-map.  Thus we can define a
  pure shuffle $\sigma^{-1}:\bd B/A & \rCofib & B\ed$ which will be our section.
  If we change the diagram representing the fiber map by a reindexing then
  $\sigma^{-1}$ changes exactly by this reindexing, so this construction is
  well-defined.
\end{proof}

\begin{remark} 
This construction is canonical in the twisted arrow category whose objects are
cofibrations of $\SC(\C)$.  It is not canonical in the ordinary arrow category.
\end{remark}

Now it only remains to show that the weak equivalences of $\SC(\C)$ satisfy all
of the axioms we desire of a Waldhausen category.

\begin{lemma} \lbl{lem:extension}
$\SC(\C)$ satisfies the extension axiom.  In other words, given any diagram
\begin{diagram}[small]
A & \rCofib & B & \rFib & B/A \\
\dWeakEquiv && \dTo^f & & \dWeakEquiv \\
A' & \rCofib & B' & \rFib & B'/A'
\end{diagram}
$f$ is also a weak equivalence.
\end{lemma}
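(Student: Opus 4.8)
The plan is to put the square into a normal form by two harmless reductions and then read off a good representative of $f$.

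\emph{Reductions.} Every weak equivalence is in particular a cofibration (a bijective set map is injective), so $g := (A'\rCofib B')\circ a$ is a cofibration out of $A$; by Lemma~\ref{lem:pushouts} and Corollary~\ref{cor:fibersection} its cofiber is again $B'/A'$ with the same cofiber map, since composing the shuffle component of $\bd A' & \rCofib & B'\ed$ with the bijective-on-sets shuffle component of $a$ does not alter its set-image. Replacing the bottom row by $\bd A & \rCofib & B' & \rFib & B'/A'\ed$ (via $g$) therefore makes the left vertical the identity without changing $f$ or the right-hand square. Next factor the right vertical as $B/A \to D \to B'/A'$ with the first map a pure sub-map whose component is a covering sub-map and the second an isomorphism; absorbing the isomorphism into a reindexing replaces $B'/A'$ by $D$ and the cofiber map accordingly. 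After these reductions we have cofibrations out of a common object, $\bd A & \rCofib & B\ed$ and $\bd A & \rCofib & B'\ed$, a map $f\colon B \to B'$ whose composite with $\bd A & \rCofib & B\ed$ equals $\bd A & \rCofib & B'\ed$, and whose composite with the cofiber map $\bd B' & \rFib & D\ed$ equals the composite of $\bd B & \rFib & B/A\ed$ with $c\colon B/A \to D$, where $c$ is a pure covering sub-map; we must prove that $f$ is a weak equivalence.

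\emph{Analysis of $f$.} Pick a representative of $f$ with sub-map $p$ and shuffle $\tau$, and write $B = \SCob{b}{j}$, $B' = \SCob{b'}{l}$. The cofibration $\bd A & \rCofib & B\ed$ identifies $A$, up to a covering sub-map, with a subfamily $\{b_j\}_{j\in K}$ for some $K\subseteq J$, with $B/A = \{b_j\}_{j\in J\setminus K}$; similarly $A$ is identified with $\{b'_l\}_{l\in K'}$, $K'\subseteq L$, by $\bd A & \rCofib & B'\ed$. Composing the evident representatives and comparing them up to reindexing --- using that the pullback of a shuffle with injective (resp.\ bijective) set map again has injective (resp.\ bijective) set map (Lemma~\ref{lem:pullbacks}) --- one reads off: from the left square, $\tau$ carries the part $F'_K$ of $F'$ lying over $\{b_j\}_{j\in K}$ bijectively onto $\{b'_l\}_{l\in K'}$, and $p$ restricts on $F'_K$ to a covering sub-map onto $\{b_j\}_{j\in K}$; from the right square, $\tau$ carries the part $F'_0$ of $F'$ whose $\tau$-image lies over $\{b'_l\}_{l\in L\setminus K'}$ bijectively onto $\{b'_l\}_{l\in L\setminus K'}$, and $p$ restricts on $F'_0$ to a covering sub-map onto $\{b_j\}_{j\in J\setminus K}$. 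Both of these invoke the sub-claim that covering sub-maps admit right cancellation: if $q$ and $q\circ p$ are covering sub-maps, then so is $p$. This holds because a covering sub-map is a \emph{disjoint} covering family: pulling the refinement cover $q\circ p$ back along a single member of the coarser cover $q$ (pullbacks of covers are covers for the topology induced on $\Tw(\C_p)_v$) leaves exactly the subfamily supported on that member together with copies of $\initial$, which axiom~(B) lets us discard.

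\emph{No off-diagonal pieces, and conclusion.} It remains to show $F' = F'_K \sqcup F'_0$. Suppose some component $\phi$ of $F'$ had $p(\phi) = b_{j_0}$ with $j_0 \in J\setminus K$ but with $\tau(\phi)$ lying over $\{b'_l\}_{l\in K'}$. The components of $F'_0$ over $b_{j_0}$ already cover $b_{j_0}$, and $\phi$, being in the same fiber of the sub-map $p$, is disjoint from each of them; pulling that cover back along $\bd \phi & \rSub & b_{j_0}\ed$ would then cover $\phi$ by copies of $\initial$, forcing $\phi = \initial$ by axiom~(B) --- impossible, since the components of objects of $\Tw(\C_p)$ are polytopes. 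Hence $F' = F'_K \sqcup F'_0$, and on this decomposition the set map of $\tau$ is a bijection onto $L = K' \sqcup (L\setminus K')$ while $p$ is a covering sub-map onto $B$, so $f$, presented with covering sub-map $p$ and bijective-set-map shuffle $\tau$, is a weak equivalence. The crux of the argument is this last step together with the right-cancellation sub-claim, both of which rest entirely on covering sub-maps being \emph{disjoint} covering families --- exactly what forbids $F'$ from acquiring components that lie over $B/A$ yet are shuffled into the image of $A$ inside $B'$; the index bookkeeping in the middle step is routine but must be carried out with care.
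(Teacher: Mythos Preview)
Your argument parallels the paper's: both split $f$ into a piece over the image of $A$ and a piece over its complement, then verify each piece is a weak equivalence. The paper does this more abstractly, invoking the canonical sections of Corollary~\ref{cor:fibersection} to write $B\cong\tilde A\sqcup B_0$ and $B'\cong\tilde A'\sqcup B'_0$, asserting (``it is easy to see'') that $f$ respects this decomposition, and then dispatching the $\tilde A$-piece via Lemma~\ref{lem:mapbtwncovers} rather than index bookkeeping. Your two reductions and explicit index-level analysis unpack the same splitting by hand; your right-cancellation sub-claim for covering sub-maps is precisely what the paper silently uses when it writes ``in particular so is $p$.'' So the route is the same, but your execution is more elementary (no appeal to Lemma~\ref{lem:mapbtwncovers}) at the cost of more bookkeeping.

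One point deserves tightening. In the ``no off-diagonal'' step you say the pulled-back cover of $\phi$ by copies of $\initial$ forces $\phi=\initial$ ``by axiom~(B).'' Axiom~(B), via the remark the paper makes just after defining covering sub-maps, lets you \emph{discard} the copies of $\initial$, leaving the empty family as a cover of $\phi$; but concluding $\phi=\initial$ from that requires the further (natural, and true in every example) fact that the empty family covers only the initial object, which is not literally among the stated axioms for a polytope complex. The paper's own ``it is easy to see that the two sections\ldots will split the above diagram'' conceals exactly the same step, so this is not a defect of your approach relative to the paper's---but the phrase ``by axiom~(B)'' overstates what that axiom alone delivers.
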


\begin{proof}
It is easy to see that the two sections given by corollary \ref{cor:fibersection} are going to be consistent in the sense that they will split the above diagram into two diagrams
\begin{diagram}[small]
A & \rAcycCofib & \tilde A &&\qquad&& B_0 & \rTo^\cong & B/A \\
\dWeakEquiv && \dTo_{f_A} &&&& \dTo^{f_B} && \dWeakEquiv \\
A' &\rAcycCofib & \tilde A' &&&& B_0' & \rTo^\cong & B'/A' 
\end{diagram}
where $f$ will equal $f_A \sqcup f_B$ up to isomorphism.  Thus it suffices to
show that both $f_A$ and $f_B$ are weak equivalences.  That $f_B$ is a weak
equivalence is obvious from the diagrams.  Up to isomorphism, the left-hand
square is represented by the following diagram in $\Tw(\C_p)$:
\begin{diagram}[small]
  A & \lCover & \tilde A \\
  \uCover && & \luSub(2,1)^p & \hat A \\
  A' & \lCover & \tilde A' & \ldTo(2,1)_\sigma
\end{diagram}
In order to show that $f_A$ is a weak equivalence we need to show that $p$ is a
covering sub-map and that $\sigma$ is a horizontal isomorphism.  If we pull the
covering sub-map $\bd \tilde A' & \rCover & A \ed$ back along $\sigma$ we will
get the sub-map $\bd \hat A & \rSub & A\ed$; thus this composition is a covering
sub-map, and in particular so is $p$.  Now applying lemma
\ref{lem:mapbtwncovers} we see that $\sigma$ must therefore be an
isomorphism.
\end{proof}

\begin{lemma} \lbl{lem:gluing}
$\SC(\C)$ satisfies the gluing axiom. In other words, given any diagram
\begin{diagram}[small]
C & \lTo & A & \rCofib & B \\
\dWeakEquiv & & \dWeakEquiv && \dWeakEquiv\\
C' & \lTo & A' & \rCofib & B'
\end{diagram}
the induced morphism $C\cup_A B \rightarrow C'\cup_{A'} B'$ is also a weak equivalence.
\end{lemma}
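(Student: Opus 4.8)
The plan is to reduce to two easy-to-handle cases and then invoke Lemmas \ref{lem:coverpushout} and \ref{lem:covermaps}. First, recall that every weak equivalence $w\colon X\rWeakEquiv X'$ factors as a pure covering sub-map $X\rightarrow\bar X$ followed by an isomorphism $\bar X\rightarrow X'$: take $\bar X$ to be the middle object of any representing diagram $\bd X & \lCover & \bar X & \rTo & X'\ed$, whose shuffle has bijective set map and is therefore an isomorphism of $\SC(\C)$. Applying this to $w_A,w_B,w_C$ and using that isomorphisms are monic in $\SC(\C)$, one checks that $\bar A,\bar B,\bar C$ assemble into a middle row $\bd\bar C & \lTo & \bar A & \rCofib & \bar B\ed$ (the map $\bar A\rightarrow\bar B$ is a cofibration, being conjugate to $A'\rCofib B'$ by isomorphisms, which are themselves cofibrations) admitting pure covering sub-maps from the top row and isomorphisms to the bottom row, with every square commuting. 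The commutativity checks are all of the form ``postcompose with the monic isomorphism $\bar C\rightarrow C'$ (resp. $\bar B\rightarrow B'$) and compare,'' using the original squares.

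Since pushouts along cofibrations exist (Lemma \ref{lem:pushouts}) and are functorial, and an isomorphism of diagrams induces an isomorphism of pushouts, the induced map $\bar C\cup_{\bar A}\bar B\rightarrow C'\cup_{A'}B'$ is an isomorphism, hence a weak equivalence; and the map $C\cup_AB\rightarrow C'\cup_{A'}B'$ is its composite with $C\cup_AB\rightarrow\bar C\cup_{\bar A}\bar B$. So it suffices to treat the case in which $w_A,w_B,w_C$ are all pure covering sub-maps. In that case the data is, in $\Tw(\C_p)$, a commuting diagram whose three vertical maps are covering sub-maps $A'\rCover A$, $B'\rCover B$, $C'\rCover C$ refining the representing spans of the two cofibration diagrams $\bd C & \lTo & A & \rCofib & B\ed$ and $\bd C' & \lTo & A' & \rCofib & B'\ed$. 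I would run the pushout construction from the proof of Lemma \ref{lem:pushouts} on both spans in parallel: complete the relevant middle squares to pullbacks and form the pushforwards $\tau_*$ and $\sigma_*$. Every step of that construction is natural in covering-sub-map refinements of the input span — completing to pullbacks is natural by the explicit formulas in Lemmas \ref{lem:pullbacks} and \ref{TwCpullback}, and the two pushforward steps are exactly governed by Lemma \ref{lem:covermaps}, which produces an induced covering sub-map between images of $\sigma_*$ (and, dually, of $\tau_*$). Threading these through, the auxiliary objects built for the $A'$-span are covering-sub-map refinements of those built for the $A$-span, compatibly with the resulting representing spans of the two pushouts. Lemma \ref{lem:coverpushout} then turns this map of spans into a covering sub-map $C'\cup_{A'}B'\rCover C\cup_AB$ in $\Tw(\C_p)$, which is precisely the representing diagram of the induced morphism $C\cup_AB\rightarrow C'\cup_{A'}B'$; hence that morphism is a pure covering sub-map, in particular a weak equivalence.

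The genuinely technical step is the last one: verifying that the multi-stage pushout construction of Lemma \ref{lem:pushouts} is functorial in covering-sub-map refinements of its input span, i.e. carrying along all the auxiliary objects (the pullback completion $\tilde A$, the objects $A'''$, and the new $B',C'$ produced by $\sigma_*$ and $\tau_*$, in that lemma's notation) for both spans simultaneously and checking that the comparison maps are covering sub-maps that fit together correctly. Lemmas \ref{lem:covermaps} and \ref{lem:coverpushout} isolate the two non-formal ingredients, so what remains is bookkeeping — but it is the bulk of the work, and is the reason the gluing axiom is proved after, and in terms of, those two lemmas.
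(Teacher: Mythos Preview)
Your approach is correct and shares the paper's overall strategy: first reduce the three vertical weak equivalences to pure covering sub-maps, then use the technical lemmas describing how covers interact with the pushout construction. The difference lies in how far the reduction is pushed. You stop after normalising the weak equivalences and then trace through every stage of the general pushout recipe of Lemma~\ref{lem:pushouts}, invoking Lemma~\ref{lem:covermaps} for the $\sigma_*$ and $\tau_*$ steps and Lemma~\ref{lem:coverpushout} for the final horizontal pushout. The paper reduces further: its ``simple calculation'' also arranges that the cofibrations $A\rCofib B$ and $A'\rCofib B'$ are themselves represented by pure covering sub-maps $\bd B&\rCover&A\ed$ and $\bd B'&\rCover&A'\ed$ (the injective-set-map shuffle part of a general cofibration splits off as a coproduct summand that rides along harmlessly in the pushout). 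In that sharper normal form the pushout is simply $\sigma_*(A_C\times_A B)$, and a \emph{single} application of Lemma~\ref{lem:covermaps} to the diagram
\begin{diagram}[small]
C & \lTo^\sigma & A_C & \lSub & A_C\times_A B\\
\uCover && \uCover && \uCover\\
C' & \lTo^{\sigma'} & A'_C & \lSub & A'_C\times_{A'} B'
\end{diagram}
finishes the argument; Lemma~\ref{lem:coverpushout} is never invoked. Your route trades a lighter reduction step for heavier bookkeeping in the endgame (as you yourself note), while the paper front-loads the normalisation so that the final step collapses to a one-line computation.
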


\begin{proof}
  It is a simple calculation to see that it suffices to consider
  diagrams represented in $\Tw(\C_p)$ by 
  \begin{diagram}[small]
    C & \lTo^\sigma & A_C & \rSub & A & \lCover & B \\
    \uCover^p &&&& \uCover && \uCover \\
    C' &\lTo^{\sigma'} & A'_C & \rSub & A' & \lCover & B'
  \end{diagram}
  By pulling back $p$ along $\sigma$ we get a cover $\bd A'_C &
  \rCover & A_C\ed$.  Then we have a cover $\bd
  A'_C\times_{A'} B' & \rCover & A_C\times_A B\ed$, and thus a diagram
  \begin{diagram}[small]
    C & \lTo^\sigma & A_C & \lSub & A_C\times_A B \\
    \uCover && \uCover && \uCover \\
    C' & \lTo^{\sigma'} & A_C' & \lSub & A'_C\times_{A'} B'
  \end{diagram}
  to which we can apply lemma \ref{lem:covermaps}, thus giving
  us a cover $\bd \sigma'_*(A'_C\times_{A'} B') & \rCover &
  \sigma_*(A_C\times_A B)\ed$.  However, a simple computation using
  the definition of the pushout shows us that this is exactly the morphism
  between the pushouts of the top and bottom row.  As it is
  represented in $\Tw(\C_p)$ by a covering sub-map, it must be a weak
  equivalence, as desired.
\end{proof}

\begin{lemma} \lbl{lem:saturation}
$\SC(\C)$ satisfies the saturation axiom. In other words, given any two composable morphisms $f,g$, if two of $f,g,gf$ are weak equivalences then so is the third.
\end{lemma}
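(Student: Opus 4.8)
The plan is to reduce the whole statement to two facts about composites of sub-maps in $\Tw(\C_p)_v^\Sub$, together with a routine bookkeeping of underlying set maps. Fix representatives $\bd A & \lSub^p & A' & \rTo^\sigma & B\ed$ of $f$ and $\bd B & \lSub^q & B' & \rTo^\tau & C\ed$ of $g$; then $gf$ is represented by $\bd A & \lSub^{p\circ\sigma^*q} & \sigma^*B' & \rTo^{\tau\circ\tilde\sigma} & C\ed$, with $\sigma^*q$ and $\tilde\sigma$ coming from the pullback square of lemma \ref{TwCpullback}, and a morphism is a weak equivalence exactly when its sub-map component is a covering sub-map and its shuffle component has bijective set map. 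The case in which $f$ and $g$ are themselves weak equivalences is immediate from the fact, proved above, that the weak equivalences form a subcategory of $\SC(\C)$, so only the two mixed cases remain.

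First I would record two lemmas about $\Tw(\C_p)_v^\Sub$. \textbf{(i)} If $p$ and $s$ are composable sub-maps and $p\circ s$ is a covering sub-map, then $s$ is a covering sub-map. \textbf{(ii)} If $p$ and $s$ are composable sub-maps with $s$ and $p\circ s$ covering sub-maps, then $p$ is a covering sub-map. Both reduce, via the layering of $\Tw(\C_p)$, to the case of a single target polytope $x$. For (ii) axiom (B) applies directly: for each polytope $y$ in the middle layer the fibre of $s$ over $y$ is a covering family of $y$, the fibre of $p\circ s$ over $x$ is a covering family refining these, so the fibre of $p$ over $x$ is a covering family, whence $p$ is a covering sub-map. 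For (i), fix a polytope $y$ in the middle layer lying over $x$ and pull the covering family of $x$ determined by $p\circ s$ back along $\bd y & \rSub & x\ed$ in $\C_v$: pullback-stability of the Grothendieck topology makes the result a covering family of $y$, and since $p$ is a sub-map the polytopes of the middle layer are pairwise disjoint over $x$, so this pulled-back family is exactly the fibre of $s$ over $y$ together with copies of $\initial$. Discarding the copies of $\initial$ (the consequence of axiom (B) already used in the text) shows that fibre is a covering family of $y$; as $y$ was arbitrary, $s$ is a covering sub-map.

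Granting (i) and (ii), each mixed case is a short chase. Suppose $f$ and $gf$ are weak equivalences. Then $\sigma$ has bijective set map, hence is an isomorphism of shuffles, so $\tilde\sigma$ is an isomorphism and $\sigma^*q$ is a covering sub-map if and only if $q$ is; moreover $\tau\circ\tilde\sigma$ having bijective set map forces $\tau$ to, and $p\circ\sigma^*q$ being a covering sub-map forces $\sigma^*q$, hence $q$, to be one by (i); so $g$ is a weak equivalence. Suppose instead $g$ and $gf$ are weak equivalences. Then $\tau$ has bijective set map, so $\tilde\sigma$ does as well; since $q$ is a covering sub-map its underlying set map is surjective (polytopes are noninitial, so covering families are nonempty), and chasing the pullback of underlying set maps (lemma \ref{lem:pullbacks}) this forces $\sigma$ to have bijective set map. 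Then $\sigma$ is an isomorphism of shuffles, so $\sigma^*q$ is a covering sub-map, and since $p\circ\sigma^*q$ is a covering sub-map (ii) shows $p$ is a covering sub-map; so $f$ is a weak equivalence.

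I expect the only real work to be fact (i): identifying the family pulled back to $y$ with the fibre of $s$ over $y$ plus copies of $\initial$, which is where the sub-map condition on $p$ (disjointness of the middle layer) and pullback-stability of the Grothendieck topology on $\C_v$ are both used. The two mixed cases themselves, and all the set-map bookkeeping, are routine given lemmas \ref{TwCpullback} and \ref{lem:pullbacks} and axiom (B).
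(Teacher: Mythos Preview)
Your proof is correct and follows essentially the same route as the paper's. The paper organizes the two mixed cases by citing axiom~(B) for your fact~(ii) and lemma~\ref{lem:mapbtwncovers} in place of your direct set-map chase; your fact~(i) is exactly the step the paper asserts without justification (``$pq'$ is a covering sub-map, which means that $q'$ must be a covering sub-map as well''), so your argument for~(i) via pullback-stability and disjointness actually fills a small gap the paper leaves implicit.
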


\begin{proof}
Let $f:A\rightarrow B$ and $g:B\rightarrow C$ be morphisms in $\SC(\C)$.  As weak equivalences form a subcategory of $\SC(\C)$ we already know that if $f$ and $g$ are both weak equivalences then so is $gf$.  So it suffices for us to focus on the other two cases.  In the following discussion we will be considering the following diagram
\begin{diagram}[small]
&&&& \sigma^*B' & \\
& & A' &  \ldSub(2,1)^{q'} && \rdTo(2,1)^{\sigma'} & B' &  \\
A &\ldSub(2,1)^p &&\rdTo(2,1)^\sigma& B &\ldSub(2,1)^q && \rdTo(2,1)^\tau& C
\end{diagram}
where the middle square is a pullback.

First, suppose that $gf$ and $g$ are weak equivalences.  Then we know that both $\tau$ and $\tau\sigma'$ are isomorphisms, which means that $\sigma'$ must be as well.  In addition, $pq'$ is a covering sub-map which means (by axiom (B)) that $p$ must be as well.  As $q$ is a cover, by lemma \ref{lem:mapbtwncovers} $\sigma$ therefore must also be an isomorphism.  As $p$ is a covering sub-map and $\sigma$ is an isomorphism, $f$ is also a weak equivalence.

Now suppose that $gf$ and $f$ are weak equivalences.  Then we know that $pq'$ is a covering sub-map, which means that $q'$ must be a covering sub-map as well.  We know that $q' = \sigma^*q$, and as $\sigma$ is an isomorphism we must have $q = \sigma^{-1})^* q'$.  As covering sub-maps are preserved by pullback $q$ must also be a covering sub-map.  In addition, by lemma \ref{lem:mapbtwncovers} as $\sigma$ is an isomorphism so is $\sigma'$, and thus (as $\tau\sigma'$ is an isomorphism) $\tau$ must be as well.  Thus we see that $q$ is a covering sub-map and $\tau$ an isomorphism, so $g$ is a weak equivalence as desired.
\end{proof}

Lastly we prove that our construction is in fact functorial.

\begin{proposition}
A polytope functor $F:\C\rightarrow \D$ induces an exact functor of Waldhausen categories $\SC(F):\SC(\C)\rightarrow \SC(\D)$.
\end{proposition}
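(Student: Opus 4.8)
The plan is to let $\SC(F)$ act ``componentwise'' and then check that it preserves every piece of structure in sight. On objects, $\SC(F)$ should send $\SCob{a}{i}$ to the tuple obtained from $\{F(a_i)\}_{i\in I}$ by discarding those $i$ for which $F(a_i)$ is initial; equivalently, on objects $\SC(F)$ is $P\circ\Tw(F)$, where $\Tw(F):\Tw(\C)\rightarrow\Tw(\D)$ is the image of $F$ under the functor $\Tw$ on $\mathbf{DblCat}$ and $P:\Tw(\D)\rightarrow\Tw(\D_p)$ is the projection of lemma \ref{lem:remsource}. This discarding is compatible with all the structure, since by (FC) a vertical map $a\rightarrow b$ with $F(b)$ initial forces $F(a)$ to be initial too. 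On morphisms, given a span in $\Tw(\C_p)$ consisting of a sub-map $p:A'\rightarrow A$ and a shuffle $\sigma:A'\rightarrow B$ representing a morphism of $\SC(\C)$, I would apply $\Tw(F)$ to get the span $Fp:FA'\rightarrow FA$, $F\sigma:FA'\rightarrow FB$ and then project. First one checks $Fp$ is again a sub-map and $F\sigma$ again a shuffle: the latter is automatic because $F$ is a double functor, and the former holds because being a sub-map means exactly that in each layer over $B$ the pairwise pullbacks vanish in $\C_v$, a condition preserved by (FC) since $F_v$ preserves pullbacks and $\initial$. Since $\Tw(F)$ leaves indexing sets (hence set maps) untouched and $F$ carries isomorphisms to isomorphisms, $\SC(F)$ is then well defined on equivalence classes of spans. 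Functoriality reduces to the identities $F(\sigma^*B')\cong(F\sigma)^*(FB')$, $F(\sigma^*q)\cong(F\sigma)^*(Fq)$ and $F(\tilde\sigma)\cong\widetilde{F\sigma}$ for the pullback of lemma \ref{TwCpullback}; these follow from (FP) once one observes that the construction of $\sigma^*$ in the proof of lemma \ref{TwCpullback} is assembled from pullbacks in $\C$ and pullbacks of indexing sets, the first preserved by (FP) and the second unchanged by $\Tw(F)$.

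Next I would check exactness. The zero object of $\SC(\C)$ is the empty tuple, which $F$ fixes. A cofibration is represented by a span in which $p$ is a covering sub-map and $\sigma$ has injective set map: $Fp$ is again a covering sub-map because $F_v$ is continuous by (FC), hence carries covering families to covering families layer by layer, and because by axiom (B) discarding the $\initial$ components of a covering family leaves a covering family; and the set map of $F\sigma$ is a restriction of that of $\sigma$, hence still injective. The identical argument with ``injective'' replaced by ``bijective'' handles weak equivalences, using in addition that the $\C$-components of a shuffle are horizontal isomorphisms (axiom (H)), so that the components discarded on the source of $F\sigma$ match exactly those discarded on its target.

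The one genuinely delicate point is that $\SC(F)$ sends a cobase-change square to a cobase-change square, i.e.\ preserves pushouts along cofibrations. By lemma \ref{lem:pushouts} the pushout $C\cup_AB$ is built in $\Tw(\C_p)$ from three kinds of operation: pullbacks along shuffles, pushforwards $\tau_*$ along shuffles, and pushouts in $\Tw(\C_h)$. Pullbacks are preserved by (FP). Pushouts in $\Tw(\C_h)$ are preserved because, by lemma \ref{lem:twpushouts}, they are assembled from connected colimits in the groupoid $\C_h$, and connected colimits in a groupoid are absolute. And although $\tau_*$ is a priori only a right adjoint, the explicit formula $\tau_*C'=\coprod_j\prod_{i\in\tau^{-1}(j)}\Sigma_i^*C_i'$ from the proof of lemma \ref{lem:pullandpush} expresses it purely in terms of pullbacks (which are the products in the vertical preorder) and coproducts of indexing sets, both preserved by $F$, so $F(\tau_*C')\cong(F\tau)_*(FC')$. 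Tracing these identities through the construction of lemma \ref{lem:pushouts}, and using lemma \ref{lem:coverpushout} to identify the induced map between the two pushouts, shows that $\SC(F)(C\cup_AB)$ is, compatibly with the structure maps, the pushout of $\SC(F)(C)\leftarrow\SC(F)(A)\rightarrow\SC(F)(B)$. This bookkeeping --- tracking how $F$ threads through $\tau_*$ inside the pushout formula --- is the step I expect to be the main obstacle; everything else is a direct unwinding of (FC) and (FP).
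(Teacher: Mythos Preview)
Your argument is essentially the paper's own: induce $\SC(F)$ from $\Tw(F)$, verify via (FC) that sub-maps and covering sub-maps are preserved while set maps are left unchanged (so cofibrations and weak equivalences are preserved), and reduce preservation of pushouts along cofibrations to the observation that the explicit recipe in lemma~\ref{lem:pushouts} is assembled from vertical pullbacks together with the operations $\sigma^*$ and $\sigma_*$, all of which (FC) and (FP) force $F$ to respect. Your insertion of the projection $P$ to discard components sent to $\initial$ is the one substantive addition --- the paper simply says ``induced by $\Tw(F)$'' and never addresses what happens when a polytope functor collapses a polytope to the initial object --- but this is a tightening of the same argument rather than a different route, and your invocations of axiom (B) (to drop initial members of a cover) and axiom (H) (to match discarded indices across a shuffle) are exactly what is needed to make the projected version go through.
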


\begin{proof}
As $\Tw$ is a functorial construction, a double functor $F:\C\rightarrow \D$ induces a double functor $\Tw(F):\Tw(\C)\rightarrow \Tw(\D)$.  Then we define the functor $\SC(F):\SC(\C)\rightarrow \SC(\D)$ to be the one induced by $\Tw(F)$.  This is clearly well-defined on objects.  As $F_v$ preserves pullbacks and the initial object, $\Tw(F)_v$ takes sub-maps to sub-maps, and thus $\SC(F)$ is well-defined on morphisms. Composition in $\SC(\C)$ is defined by pulling back vertical  morphisms along horizontal morphisms, which commutes with $F$ as $F$ is a polytope functor, so $\SC(F)$ is a well-defined functor.  If $f$ is a (vertical or horizontal) morphism in $\Tw(\C)$ then $F(f)$ and $f$ have the same set-map, so $\Tw(F)$ preserves injectivity and bijectivity of set-maps, so to see that $\SC(F)$ preserves cofibrations and weak equivalences it suffices to check that $\Tw(F)$ preserves covering sub-maps, which it must as $F_v$ is continuous.  So in order to have exactness it suffices to show that $\SC(F)$ preserves pushouts along cofibrations.

Suppose that we are given the diagram
\begin{diagram}
C & \lTo & A & \rCofib & B
\end{diagram}
in $\SC(\C)$, which is represented by the diagram
\begin{diagram}[small]
&& A' && && A'' && && \\
C &\ldTo(2,1)^\tau & &\rdDotsto(2,1)^q& A & \ldCover(2,1)^p& &\rdTo(2,1)^\sigma& B
\end{diagram}
in $\Tw(\C)$.  The pushout of this is computed by computing the pullback of the two sub-maps, pushing forward the result along $\tau$, pulling it back along $\tau$, and then pushing it foward along $\sigma$.  Thus in order to see that $\SC(F)$ preserves pushouts it suffices to show that $\Tw(F)$ commutes with pullbacks of sub-maps, and pulling back or pushing forward a sub-map along a shuffle.  By considering the formulas for pullbacks and pushforwards we see they they consist entirely of pulling back squares in $\C$ and taking vertical pullbacks in $\C$, so we are done.
\end{proof}

\begin{remark}
  The proof of lemma \ref{lem:saturation} is the only place in this proof that the full power of
  axiom (B) was used.  The only other place where we used axiom (B) was to show that covering sub-maps are preserved by pullbacks, and there the only fact we used was that removing the morphism $\bd\initial & \rSub & A\ed$ from a covering family of $A$ still keeps it a covering family.  So if we had a double category $\C$ which satisfied all
  of the axioms except for (B) and satisfied this covering condition, we would still have a Waldhausen category
  $\SC(\C)$, except that it would not satisfy one direction of the Saturation
  axiom.  (If $gf$ and $f$ were weak equivalences then $g$ would still be a weak equivalence.)
  In that case we could still define the K-theory of $\SC(\C)$, but our computations would be more complicated,   We keep axiom (B), however, because it
  holds in all of our motivating examples, and because the Saturation axiom
  makes many of our computations simpler.
\end{remark}

\bibliographystyle{plain}
\bibliography{IZ}

\end{document}